\numberwithin{table}{section}
\newcommand{\A}{\mathrm{A}} \newcommand{\AGL}{\mathrm{AGL}}   \newcommand{\Aut}{\mathrm{Aut}}
 \newcommand{\bbF}{\mathbb{F}} \newcommand{\bfO}{\mathbf{O}}
     \newcommand{\calO}{\mathcal{O}}    
\newcommand{\D}{\mathrm{D}}
\newcommand{\G}{\mathrm{G}}     \newcommand{\GaU}{\mathrm{\Gamma U}}  \newcommand{\GL}{\mathrm{GL}}   \newcommand{\GU}{\mathrm{GU}}
\newcommand{\J}{\mathrm{J}}
\newcommand{\lefthat}{\scalebox{1.3}[1]{\text{$\hat{~}$}}}
\newcommand{\M}{\mathrm{M}} \newcommand{\magma}{\textsc{Magma}}
\newcommand{\N}{\mathrm{N}}
\newcommand{\Out}{\mathrm{Out}}
\newcommand{\Pa}{\mathrm{P}}     \newcommand{\PGaU}{\mathrm{P\Gamma U}}    \newcommand{\PSL}{\mathrm{PSL}}    \newcommand{\PSp}{\mathrm{PSp}} \newcommand{\PSU}{\mathrm{PSU}}
  \newcommand{\SL}{\mathrm{SL}}  \newcommand{\Soc}{\mathrm{Soc}} \newcommand{\Sp}{\mathrm{Sp}}  \newcommand{\SU}{\mathrm{SU}} \newcommand{\Suz}{\mathrm{Suz}} \newcommand{\Sy}{\mathrm{S}}
\newtheorem{theorem}{Theorem}[section]
\newtheorem{lemma}[theorem]{Lemma}
\newtheorem{problem}[theorem]{Problem}
\theoremstyle{definition}
\newtheorem*{remark}{Remark}
\begin{document}

\title[Factorizations of almost simple unitary groups]{Factorizations of almost simple unitary groups}

\author[Li]{Cai Heng Li}
\address{(Li) SUSTech International Center for Mathematics and Department of Mathematics\\Southern University of Science and Technology\\Shenzhen 518055\\Guangdong\\P.~R.~China}
\email{lich@sustech.edu.cn}

\author[Wang]{Lei Wang}
\address{(Wang) School of Mathematics and Statistics\\Yunnan University\\Kunming 650091\\Yunnan\\P.~R.~China}
\email{wanglei@ynu.edu.cn}

\author[Xia]{Binzhou Xia}
\address{(Xia) School of Mathematics and Statistics\\The University of Melbourne\\Parkville 3010\\VIC\\Australia}
\email{binzhoux@unimelb.edu.au}

\begin{abstract}
This is the second one in a series of papers classifying the factorizations of almost simple groups with nonsolvable factors.
In this paper we deal with almost simple unitary groups.

\textit{Key words:} group factorizations; almost simple groups

\textit{MSC2020:} 20D40, 20D06, 20D08
\end{abstract}

\maketitle

\section{Introduction}

An expression $G=HK$ of a group $G$ as the product of subgroups $H$ and $K$ is called a \emph{factorization} of $G$, where $H$ and $K$ are called \emph{factors}. A group $G$ is said to be \emph{almost simple} if $S\leqslant G\leqslant\Aut(S)$ for some nonabelian simple group $S$, where $S=\Soc(G)$ is the \emph{socle} of $G$. In this paper, by a factorization of an almost simple group we mean that none its factors contains the socle. The main aim of this paper is to solve the long-standing open problem:

\begin{problem}\label{PrbXia1}
Classify factorizations of finite almost simple groups.
\end{problem}

Determining all factorizations of almost simple groups is a fundamental problem in the theory of simple groups, which was proposed by Wielandt~\cite[6(e)]{Wielandt1979} in 1979 at The Santa Cruz Conference on Finite Groups. It also has numerous applications to other branches of mathematics such as combinatorics and number theory, and has attracted considerable attention in the literature.
In what follows, all groups are assumed to be finite if there is no special instruction.

The factorizations of almost simple groups of exceptional Lie type were classified by Hering, Liebeck and Saxl~\cite{HLS1987}\footnote{In part~(b) of Theorem~2 in~\cite{HLS1987}, $A_0$ can also be $\G_2(2)$, $\SU_3(3)\times2$, $\SL_3(4).2$ or $\SL_3(4).2^2$ besides $\G_2(2)\times2$.} in 1987.
For the other families of almost simple groups, a landmark result was achieved by Liebeck, Praeger and Saxl~\cite{LPS1990} thirty years ago, which classifies the maximal factorizations of almost simple groups. (A factorization is said to be \emph{maximal} if both the factors are maximal subgroups.)
Then factorizations of alternating and symmetric groups are classified in~\cite{LPS1990}, and factorizations of sporadic almost simple groups are classified in~\cite{Giudici2006}.
This reduces Problem~\ref{PrbXia1} to the problem on classical groups of Lie type.
Recently, factorizations of almost simple groups with a factor having at least two nonsolvable composition factors are classified in~\cite{LX2019}\footnote{In Table~1 of~\cite{LX2019}, the triple $(L,H\cap L,K\cap L)=(\Sp_{6}(4),(\Sp_2(4)\times\Sp_{2}(16)).2,\G_2(4))$ is missing, and for the first two rows $R.2$ should be $R.P$ with $P\leqslant2$.}, and those with a factor being solvable are described in~\cite{LX} and~\cite{BL}.

As usual, for a finite group $G$, we denote by $G^{(\infty)}$ the smallest normal subgroup of $X$ such that $G/G^{(\infty)}$ is solvable.
For factorizations $G=HK$ with nonsolvable factors $H$ and $K$ such that $L=\Soc(G)$ is a classical group of Lie type, the triple $(L,H^{(\infty)},K^{(\infty)})$ is described in~\cite{LWX}. Based on this work, in the present paper we characterize the triples $(G,H,K)$ such that $G=HK$ with $H$ and $K$ nonsolvable, and $G$ is a unitary group.

For groups $H,K,X,Y$, we say that $(H,K)$ contains $(X,Y)$ if $H\geqslant X$ and $K\geqslant Y$, and that $(H,K)$ \emph{tightly contains} $(X,Y)$ if in addition $H^{(\infty)}=X^{(\infty)}$ and $K^{(\infty)}=Y^{(\infty)}$. 
Our main result is the following Theorem~\ref{ThmUnitary}. 
Note that it is elementary to determine the factorizations of $G/L$ as this group has relatively simple structure (and in particular is solvable).

\begin{theorem}\label{ThmUnitary}
Let $G$ be an almost simple group with socle $L=\PSU_n(q)$, where $n\geqslant3$ and $(n,q)\neq(3,2)$, and let $H$ and $K$ be nonsolvable subgroups of $G$ not containing $L$. Then $G=HK$ if and only if (with $H$ and $K$ possibly interchanged) $G/L=(HL/L)(KL/L)$ and one of the following holds:
\begin{enumerate}[{\rm (a)}]
\item $(H,K)$ tightly contains $(X^\alpha,Y^\alpha)$ for some $(X,Y)$ in Table~$\ref{TabUnitary}$ and $\alpha\in\Aut(L)$;
\item $H^{(\infty)}=\lefthat(P.S)$ with $1<P<R$, and $S=\SL_a(q^{2b})$ ($m=ab$), $\Sp_a(q^{2b})$ ($m=ab$) or $\G_2(q^{2b})$ ($m=6b$, $q$ even), where $R=q^{m^2}$ is the unipotent radical of $\Pa_m[L]$, and $K^{(\infty)}=\SU_{m-1}(q)$ such that $HK\supseteq R$.
\end{enumerate}
\end{theorem}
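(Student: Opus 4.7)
The plan is to leverage the main result of~\cite{LWX}, which for any factorization $G=HK$ of an almost simple classical group with nonsolvable factors determines all possible triples $(L,H^{(\infty)},K^{(\infty)})$ where $L=\Soc(G)$. Specialising to $L=\PSU_n(q)$ leaves a finite list of candidate socle-pairs, reducing Theorem~\ref{ThmUnitary} to two tasks. First, \emph{necessity}: starting from $G=HK$, one must show that $(H,K)$ takes one of the two stated shapes. Second, \emph{sufficiency}: given a pair $(H,K)$ of the stated shape and satisfying $G/L=(HL/L)(KL/L)$, one must verify $G=HK$.

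For necessity, $G=HK$ immediately forces $G/L=(HL/L)(KL/L)$ by projection through $G\to G/L$. Invoking~\cite{LWX} then places $(H^{(\infty)},K^{(\infty)})$, up to $\Aut(L)$-conjugacy, either in one of the rows of Table~\ref{TabUnitary} --- giving part~(a) with $(X,Y):=(H^{(\infty)},K^{(\infty)})^{\alpha^{-1}}$ for a suitable conjugating $\alpha$ --- or in the parabolic configuration of part~(b). In the latter setting, one further needs to extract $HK\supseteq R$ from $G=HK$ by reading the factorization modulo the Levi decomposition of $\Pa_m[L]$.

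For sufficiency, I would verify $|H||K|=|G||H\cap K|$ row by row in Table~\ref{TabUnitary}. Tight containment combined with the quotient factorization condition reduces the verification to the socle-level identity $L=(H\cap L)(K\cap L)$, which is visible from the maximal factorizations catalogued in~\cite{LPS1990}; the extension from socle to $G$ is then a routine coset-counting argument using that the quotient factorises. For the construction in~(b), the hypothesis $HK\supseteq R$ together with the structure of $\Pa_m[L]$ and $K^{(\infty)}\cong\SU_{m-1}(q)$ forces $HK\supseteq \Pa_m[L]$, and a classical transitivity-on-isotropic-points argument for the action of $\SU_{m-1}(q)$ then yields $HK\supseteq L$, whence $G=HK$ by the quotient condition.

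The hardest step is the analysis in case~(b). Here $H^{(\infty)}=\lefthat(P.S)$ involves a proper non-trivial subgroup $P$ of the unipotent radical $R\cong q^{m^2}$ rather than a reductive subgroup, which defeats the straightforward order-comparison available in case~(a). One must instead track how $P$, the Levi-type complement $S$ --- which may be $\SL_a(q^{2b})$, $\Sp_a(q^{2b})$, or the exceptional $\G_2(q^{2b})$ in even characteristic --- and $K^{(\infty)}\cong\SU_{m-1}(q)$ jointly generate enough of $\Pa_m[L]$ to cover $R$. The $S$-invariance of $P$ together with the induced action of $K^{(\infty)}$ on $R$ must be checked separately for each possible $S$; the $\G_2$ subcase is the most delicate, because of the exceptional inclusion $\G_2(q^{2b})\hookrightarrow\Sp_6(q^{2b})$ in even characteristic and the need to identify which $S$-invariant $P$ are compatible with the unitary embedding.
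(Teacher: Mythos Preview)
Your plan has a genuine gap in the necessity direction, and it is not where you think it is.

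The output of~\cite{LWX} is not Table~\ref{TabUnitary} but the coarser Table~\ref{TabInftyUnitary}: it pins down $H^{(\infty)}$ and $K^{(\infty)}$, not $H$ and $K$. In several rows of Table~\ref{TabUnitary} the group $X$ is strictly larger than $X^{(\infty)}$; for instance in Rows~4--6 one has $X=\Sp_m(q^2).2$ or $(\SL_m(q^2).2)/d$, whereas $X^{(\infty)}=\Sp_m(q^2)$ or $\SL_m(q^2)/d$. So knowing $H^{(\infty)}=X^{(\infty)}$ up to conjugacy does \emph{not} give tight containment of $(X,Y)$: you still have to prove that $H$ contains the extra involution or field automorphism. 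This is exactly the content of the final lemma in the paper's proof, which treats the subcase $P=1$ of Row~1 of Table~\ref{TabInftyUnitary} and uses the maximal factorization tables of~\cite{LPS1990} to force $q\in\{2,4\}$ and to locate the required outer element in $H$. You have not addressed this case at all; your dichotomy ``either a row of Table~\ref{TabUnitary} or the parabolic configuration of part~(b)'' silently assumes it away.

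Relatedly, your sufficiency argument fails for precisely these rows. You propose to reduce to the socle-level identity $L=(H\cap L)(K\cap L)$, but in Rows~4--6 and~17--18 one has $Z>L$, and $X\cap L=X^{(\infty)}$, $Y\cap L=Y^{(\infty)}$ do \emph{not} factorize $L$: indeed $\lefthat\SL_m(q^2)\cdot\SU_{2m-1}(q)\neq\PSU_{2m}(q)$ in general, which is why the extra $.2$ or $.4$ is needed. The paper instead proves $Z=XY$ directly for each row (Lemmas~\ref{LemUnitary03}--\ref{LemUnitary06}, \ref{LemUnitary20}--\ref{LemUnitary21}) by an explicit intersection computation, showing $(T{:}\langle\gamma\rangle)\cap G_v=T\cap G_v$ via a trace argument on $\lambda$.

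Finally, you have mislocated the difficulty. Case~(b) with $1<P<R$ is in fact the easy direction: necessity of $HK\supseteq R$ is immediate, and sufficiency follows from a short quotient argument using Lemma~\ref{LemUnitary02} and the antiflag-transitivity results of~\cite{LWX(2)} (no separate analysis of $S$-invariant subgroups $P$ is needed). The delicate case is $P=1$, as above.
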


\begin{remark}
Here are some remarks on Table~\ref{TabUnitary}:
\begin{enumerate}[{\rm(I)}]
\item The column $Z$ gives the smallest almost simple group with socle $L$ that contains $X$ and $Y$. In other words, $Z=\langle L,X,Y\rangle$.
It turns out that $Z=XY$ for all pairs $(X,Y)$.
\item The groups $X$, $Y$ and $Z$ are described in the corresponding lemmas whose labels are displayed in the last column. 
\item The description of groups $X$ and $Y$ are up to conjugations in $Z$ (see Lemma~\ref{LemXia04}(b) and Lemma~\ref{LemXia03}).
\end{enumerate}
\end{remark}

\begin{table}[htbp]
\captionsetup{justification=centering}
\caption{$(X,Y)$ for unitary groups}\label{TabUnitary}
\begin{tabular}{|l|l|l|l|l|l|l|}
\hline
Row & $Z$ & $X$ & $Y$ & Remarks & Lemma\\
\hline
1 & $\PSU_{2m}(q)$ & $\lefthat(q^{m^2}{:}\SL_a(q^{2b}))$ & $\SU_{2m-1}(q)$ & $m=ab$ & \ref{LemUnitary02}\\
2 & $\PSU_{2m}(q)$ & $\lefthat(q^{m^2}{:}\Sp_a(q^{2b}))$ &$\SU_{2m-1}(q)$ & $m=ab$ & \ref{LemUnitary02}\\
3 & $\PSU_{2m}(q)$ & $\lefthat(q^{m^2}{:}\G_2(q^{2b}))$ & $\SU_{2m-1}(q)$ & $m=6b$, & \ref{LemUnitary02}\\
  & & & & $q$ even & \\
\hline
4 & $\PSU_{2m}(2)$ & $(\SL_m(4).2)/d$, $\Sp_m(4).2$ & $\SU_{2m-1}(2)$ & $d=(m,3)$ & \ref{LemUnitary03}, \ref{LemUnitary05}\\
5 & $\PSU_{2m}(2).2$ & $(\SL_m(4).2)/d$, $\Sp_m(4).2$ & $\SU_{2m-1}(2).2$ & $d=(m,3)$ & \ref{LemUnitary04}, \ref{LemUnitary06}\\
6 & $\PSU_{2m}(4).4$ & $(\SL_m(16).4)/d$, $\Sp_m(16).4$ & $\SU_{2m-1}(4).4$ & $d=(m,5)$ & \ref{LemUnitary04}, \ref{LemUnitary06}\\
\hline
7 & $\PSU_{2m}(q)$ & $\PSp_{2m}(q)$ & $\SU_{2m-1}(q)$ & & \ref{LemUnitary07}\\
8 & $\PSU_6(q)$ & $\G_2(q)$ & $\SU_5(q)$ & $q$ even & \ref{LemUnitary11}\\
\hline
9 &  $\PSU_4(3).2$ & $3^4{:}(\A_5\times2)$ & $\PSL_3(4).2$ & & \ref{LemUnitary12}\\
10 &  $\PSU_4(3)$ & $\PSp_4(3)$ & $\PSL_3(4)$ & & \ref{LemUnitary13}\\
11 &  $\PSU_4(3).\calO$ & $\PSL_2(7).\calO$ & $(3^4{:}\A_6).\calO$ & $\calO\in\{4,2^2\}$ & \ref{LemUnitary14}\\
12 &  $\PSU_4(3).2$ & $\PSL_3(4).2$ & $(3^4{:}\A_6).2$ & & \ref{LemUnitary15}\\
13 &  $\PSU_4(5).2$ & $5^4{:}(\PSL_2(25)\times2)$ & $(3.\A_7)\times2$ & & \ref{LemUnitary16}\\
14 &  $\PSU_6(2)$ & $\PSU_4(3)$, $\M_{22}$ & $\SU_5(2)$ & & \ref{LemUnitary17}\\
15 &  $\PSU_9(2)$ & $\J_3$ & $2^{1+14}{:}\SU_7(2)$ & & \ref{LemUnitary18}\\
16 & $\PSU_{12}(2)$ & $\Suz$, $\G_2(4).2$ & $\SU_{11}(2)$ & & \ref{LemUnitary19}, \ref{LemUnitary20}\\
17 & $\PSU_{12}(2).2$ & $\G_2(4).2$ & $\SU_{11}(2).2$ & & \ref{LemUnitary21}\\
18 & $\PSU_{12}(4).4$ & $\G_2(16).4$ & $\SU_{11}(4).4$ & & \ref{LemUnitary21}\\
\hline
\end{tabular}
\vspace{3mm}
\end{table}

\section{Preliminaries}

In this section we collect some elementary facts regarding group factorizations.

\begin{lemma}\label{LemXia01}
Let $G$ be a group, let $H$ and $K$ be subgroups of $G$, and let $N$ be a normal subgroup of $G$. Then $G=HK$ if and only if $HK\supseteq N$ and $G/N=(HN/N)(KN/N)$.
\end{lemma}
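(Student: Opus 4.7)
The plan is to prove the two implications separately, with the reverse direction being the one that actually uses both hypotheses in a nontrivial way.

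For the forward direction, assume $G=HK$. Then trivially $HK=G\supseteq N$. Moreover, for any coset $gN\in G/N$, write $g=hk$ with $h\in H$ and $k\in K$; then $gN=(hN)(kN)\in(HN/N)(KN/N)$, giving $G/N=(HN/N)(KN/N)$.

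For the reverse direction, assume $HK\supseteq N$ and $G/N=(HN/N)(KN/N)$. Take an arbitrary $g\in G$. The quotient factorization yields $h\in H$ and $k\in K$ with $gN=(hN)(kN)=hkN$, so $g=hkn$ for some $n\in N$. The key manoeuvre is to use the normality of $N$ to rewrite $kn=n'k$ for some $n'\in N$, giving $g=hn'k$. Since $n'\in N\subseteq HK$, write $n'=h''k''$ with $h''\in H$ and $k''\in K$; then $hn'=(hh'')k''\in HK$, say $hn'=h_1k_1$. Substituting back yields $g=h_1k_1k\in H\cdot K=HK$, and hence $G=HK$.

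I don't expect any serious obstacles here, as the lemma is purely formal and hinges only on two elementary facts: that $N$ being normal allows commuting $N$ past $K$, and that $N\subseteq HK$ together with $H\cdot H=H$ ensures $HN\subseteq HK$. The only thing to be careful about is that $HK$ need not be a subgroup, so the argument must be carried out on individual elements rather than by manipulating the set $HK$ directly (for instance one cannot simply write $HKN=HK\cdot HK=HK$).
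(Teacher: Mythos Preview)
Your proof is correct and follows essentially the same route as the paper: use normality to rewrite $G=(HN)(KN)$ as $HNK$, then absorb $N\subseteq HK$ to obtain $HNK\subseteq HK$. The paper carries this out directly at the set level via $G=HNK\subseteq H(HK)K=HK$, which is valid because $HH=H$ and $KK=K$ even though $HK$ need not be a subgroup; so your element-by-element unpacking, while perfectly fine, is not strictly necessary.
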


\begin{proof}
If $G=HK$, then $HK\supseteq N$, and taking the quotient modulo $N$ we obtain
\[
G/N=(HN/N)(KN/N).
\]
Conversely, suppose that $HK\supseteq N$ and $G/N=(HN/N)(KN/N)$. Then 
\[
G=(HN)(KN)=HNK 
\]
as $N$ is normal in $G$. Since $N\subseteq HK$, it follows that $G=HNK\subseteq H(HK)K=HK$, which implies $G=HK$.
\end{proof}

Let $L$ be a nonabelian simple group. We say that $(H,K)$ is a \emph{factor pair} of $L$ if $H$ and $K$ are subgroups of $\Aut(L)$ such that $HK\supseteq L$. For an almost simple group $G$ with socle $L$ and subgroups $H$ and $K$ of $G$, Lemma~\ref{LemXia01} shows that $G=HK$ if and only if $G/L=(HL/L)(KL/L)$ and $(H,K)$ is a factor pair.
As the group $G/L$ has a simple structure (and in particular is solvable), it is elementary to determine the factorizations of $G/L$.
Thus to know all the factorizations of $G$ is to know all the factor pairs of $L$.
Note that, if $(H,K)$ is a factor pair of $L$, then any pair of subgroups of $\Aut(L)$ containing $(H,K)$ is also a factor pair of $L$.
Hence we have the following:

\begin{lemma}\label{LemXia02}
Let $G$ be an almost simple group with socle $L$, and let $H$ and $K$ be subgroups of $G$ such that $(H,K)$ contains some factor pair of $L$. Then $G=HK$ if and only if $G/L=(HL/L)(KL/L)$.
\end{lemma}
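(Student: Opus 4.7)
The plan is to deduce this lemma directly from Lemma~\ref{LemXia01} by exploiting the hypothesis that $(H,K)$ contains a factor pair of $L$. The forward direction is immediate: if $G=HK$, then quotienting by the normal subgroup $L$ yields $G/L=(HL/L)(KL/L)$, which is in fact already the easy half of Lemma~\ref{LemXia01} (and does not use the factor pair hypothesis at all).

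For the backward direction I will verify the two hypotheses of Lemma~\ref{LemXia01}. Suppose $G/L=(HL/L)(KL/L)$; it remains to show that $HK\supseteq L$, after which Lemma~\ref{LemXia01} applied to the normal subgroup $N=L$ will give $G=HK$. By hypothesis there exist subgroups $X\leqslant H$ and $Y\leqslant K$ of $\Aut(L)$ with $XY\supseteq L$ (this is precisely what it means for $(X,Y)$ to be a factor pair of $L$ that is contained in $(H,K)$). Since $H\supseteq X$ and $K\supseteq Y$, we get $HK\supseteq XY\supseteq L$, as required.

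There is essentially no obstacle here; the statement is a packaging lemma that makes explicit how the definition of factor pair combines with Lemma~\ref{LemXia01}. The only thing worth being careful about is that $H$ and $K$ are subgroups of $G$ (and therefore of $\Aut(L)$), so that the containment $H\supseteq X$, $K\supseteq Y$ makes sense and the product $HK$ inside $\Aut(L)$ coincides with $HK$ inside $G$ once we know it contains $L$. The proof will therefore be a short two-step argument: invoke the definition of factor pair to obtain $HK\supseteq L$, then invoke Lemma~\ref{LemXia01} to conclude.
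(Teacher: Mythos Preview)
Your proposal is correct and follows essentially the same approach as the paper. In fact the paper does not give a separate formal proof of this lemma at all: it is stated as an immediate consequence of the discussion preceding it, namely that Lemma~\ref{LemXia01} (with $N=L$) says $G=HK$ if and only if $HK\supseteq L$ and $G/L=(HL/L)(KL/L)$, and that any pair containing a factor pair is itself a factor pair---precisely the two observations you spell out.
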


In light of Lemma~\ref{LemXia02}, the key to determine the factorizations of $G$ with nonsolvable factors is to determine the minimal ones (with respect to the containment) among factor pairs of $L$ with nonsolvable subgroups.

\begin{lemma}\label{LemXia03}
Let $L$ be a nonabelian simple group, and let $(H,K)$ be a factor pair of $L$.
Then $(H^\alpha,K^\alpha)$ and $(H^x,K^y)$ are factor pairs of $L$ for all $\alpha\in\Aut(L)$ and $x,y\in L$.
\end{lemma}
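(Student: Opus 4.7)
The plan is to treat the two assertions separately; in each case the proof reduces to a short absorption identity that then combines with the hypothesis $HK\supseteq L$.

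For $(H^\alpha,K^\alpha)$ with $\alpha\in\Aut(L)$, I would invoke that $L$ is normal in $\Aut(L)$, so $L^\alpha=L$. Since conjugation by $\alpha$ is an automorphism of $\Aut(L)$, it respects products of subsets, and therefore $H^\alpha K^\alpha=(HK)^\alpha\supseteq L^\alpha=L$. Clearly $H^\alpha$ and $K^\alpha$ are subgroups of $\Aut(L)$, so $(H^\alpha,K^\alpha)$ is a factor pair.

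For $(H^x,K^y)$ with $x,y\in L$, direct conjugation does not interact cleanly with multiplication, so I would instead absorb the twist $xy^{-1}$ into the product $HK$. Since $xy^{-1}\in L\subseteq HK$, one can write $xy^{-1}=hk$ with $h\in H$ and $k\in K$, giving
\[
H(xy^{-1})K=H(hk)K=(Hh)(kK)=HK,
\]
and hence
\[
H^xK^y=x^{-1}Hx\cdot y^{-1}Ky=x^{-1}\bigl(H(xy^{-1})K\bigr)y=x^{-1}(HK)y\supseteq x^{-1}Ly=L,
\]
where the final equality uses $x^{-1},y\in L$.

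The only genuine obstacle is spotting the absorption identity $H(xy^{-1})K=HK$; once it is in hand each half of the lemma is a one-line verification. The argument also makes transparent which hypotheses are actually being used: for the first part only the normality of $L$ in $\Aut(L)$, and for the second part only that $xy^{-1}\in HK$ together with $x,y\in L$, so no deeper property of $L$ (simplicity, for instance) enters the proof.
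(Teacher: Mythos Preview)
Your proof is correct and follows essentially the same approach as the paper: both parts use exactly the identities the paper uses, namely $(HK)^\alpha\supseteq L^\alpha=L$ for the first and the absorption $H(xy^{-1})K=H(hk)K=HK$ (with $xy^{-1}=hk\in HK$) for the second. Your write-up is slightly more expansive but the argument is identical.
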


\begin{proof}
It is evident that $H^\alpha K^\alpha=(HK)^\alpha\supseteq L^\alpha=L$. Hence $(H^\alpha,K^\alpha)$ is a factor pair.
Since $xy^{-1}\in L\subseteq HK$, there exist $h\in H$ and $k\in K$ such that $xy^{-1}=hk$. Therefore, 
\[
H^xK^y=x^{-1}Hxy^{-1}Ky=x^{-1}HhkKy=x^{-1}HKy\supseteq x^{-1}Ly=L,
\]
which means that $(H^x,K^y)$ is a factor pair.
\end{proof}

The next lemma is~\cite[Lemma~2(i)]{LPS1996}.

\begin{lemma}\label{LemXia05}
Let $G$ be an almost simple group with socle $L$, and let $H$ and $K$ be subgroups of $G$ not containing $L$. If $G=HK$, then $HL\cap KL=(H\cap KL)(K\cap HL)$. 
\end{lemma}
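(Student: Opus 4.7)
The plan is to prove the equality $HL\cap KL=(H\cap KL)(K\cap HL)$ by establishing both inclusions. The whole argument will rest on just two facts: $L$ is normal in $G$ (so $HL$ and $KL$ are subgroups of $G$), and the global factorization $G=HK$ is given.

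First I would dispatch the easy containment
\[
(H\cap KL)(K\cap HL)\subseteq HL\cap KL.
\]
Given any $x\in H\cap KL$ and $y\in K\cap HL$, I would observe that $x\in H\subseteq HL$ and $y\in HL$, so the product $xy$ lies in the subgroup $HL$; symmetrically $x\in KL$ and $y\in K\subseteq KL$ force $xy\in KL$. Hence $xy\in HL\cap KL$.

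For the reverse containment $HL\cap KL\subseteq (H\cap KL)(K\cap HL)$, I would take an arbitrary $g\in HL\cap KL$ and apply the hypothesis $G=HK$ once to write $g=hk$ with $h\in H$ and $k\in K$. Then $h=gk^{-1}$, and since $g\in KL$ together with $k^{-1}\in K\subseteq KL$ both lie in the subgroup $KL$, I obtain $h\in H\cap KL$. Symmetrically, $k=h^{-1}g$ with $h^{-1}\in H\subseteq HL$ and $g\in HL$ gives $k\in K\cap HL$. This exhibits $g=hk\in (H\cap KL)(K\cap HL)$, finishing the proof.

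There is no real obstacle here: the argument is a two-line manipulation using normality of $L$ to make $HL$ and $KL$ subgroups and using $G=HK$ exactly once to factor a representative of $HL\cap KL$. In particular, the almost-simplicity of $G$ and the hypothesis that $H$ and $K$ omit $L$ are not used in deriving the identity; they only frame the setting in which the lemma will later be invoked.
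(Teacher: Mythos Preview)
Your proof is correct. The paper does not actually prove this lemma itself but simply cites \cite[Lemma~2(i)]{LPS1996}; your argument is the standard two-line verification (using only that $L\trianglelefteqslant G$ and $G=HK$) and is essentially the proof given in that reference.
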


The following lemma implies that we may consider specific representatives of a conjugacy class of subgroups when studying factorizations of a group.

\begin{lemma}\label{LemXia04}
Let $G=HK$ be a factorization. Then for all $x,y\in G$ we have $G=H^xK^y$ with $H^x\cap K^y\cong H\cap K$.
\end{lemma}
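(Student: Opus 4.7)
The plan is to treat the two assertions in turn, beginning with the factorization. Using the hypothesis $G=HK$, I can write $xy^{-1}=hk$ for some $h\in H$ and $k\in K$, so that $H\cdot(xy^{-1})\cdot K = HhkK = HK = G$. Conjugating this equality suitably then gives
\[
H^xK^y \;=\; (x^{-1}Hx)(y^{-1}Ky) \;=\; x^{-1}\bigl(H(xy^{-1})K\bigr)y \;=\; x^{-1}Gy \;=\; G.
\]

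For the isomorphism $H^x\cap K^y\cong H\cap K$, I would reduce the general case to a simpler one by an overall conjugation. Since conjugation by $x$ is an automorphism of $G$ sending $H\cap K^{yx^{-1}}$ onto $H^x\cap K^y$, it suffices to prove $H\cap K^g\cong H\cap K$ for every $g\in G$. To do this I invoke the dual factorization $G=KH$, which follows from $G=HK$ by inversion, and factor $g=k'h'$ with $k'\in K$ and $h'\in H$. Because $k'\in K$ gives $K^{k'}=K$, I get $K^g=K^{k'h'}=K^{h'}$; and because $h'\in H$ gives $H^{h'}=H$, I obtain
\[
H\cap K^g \;=\; H^{h'}\cap K^{h'} \;=\; (H\cap K)^{h'} \;\cong\; H\cap K,
\]
completing the lemma.

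The argument is essentially elementary and I do not anticipate a real obstacle; its content is the bookkeeping observation that by placing the $K$-part of $g$ on the left and the $H$-part on the right (which requires the dual factorization $G=KH$), the left factor can be absorbed into $K^g$ while the right factor simultaneously conjugates both $H$ and $K$. The only step at which one could stall is spotting that one must use the ordering $g=k'h'$ rather than the naive $g=hk$; once that is in place, everything reduces to the identities $K^{k'}=K$ and $H^{h'}=H$.
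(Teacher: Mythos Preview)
Your proof is correct and follows essentially the same idea as the paper's. The only difference is a harmless mirror symmetry: the paper conjugates by $y$ to reduce to $H^{xy^{-1}}\cap K$, then uses $xy^{-1}=hk$ from $G=HK$ directly to get $H^{hk}\cap K=H^k\cap K=(H\cap K)^k$; you instead conjugate by $x$ to reduce to $H\cap K^{yx^{-1}}$ and consequently need the dual factorization $G=KH$ to write $yx^{-1}=k'h'$. Both routes are one-line computations once the appropriate factorization of the conjugating element is chosen.
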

  
\begin{proof}
As $xy^{-1}\in G=HK$, there exists $h\in H$ and $k\in K$ such that $xy^{-1}=hk$. Thus
\[
H^xK^y=x^{-1}Hxy^{-1}Ky=x^{-1}HhkKy=x^{-1}HKy=x^{-1}Gy=G,
\]
and
\[
H^x\cap K^y=(H^{xy^{-1}}\cap K)^y\cong H^{xy^{-1}}\cap K=H^{hk}\cap K=H^k\cap K=(H\cap K)^k\cong H\cap K.\qedhere
\]
\end{proof}

\section{Notation}

Throughout this paper, let $q=p^f$ be a power of a prime $p$, let $n\geqslant3$ be an integer such that $(n,q)\neq(3,2)$, let $\,\overline{\phantom{\varphi}}\,$ be the homomorphism from $\GaU_n(q)$ to $\PGaU_n(q)$ modulo scalars, let $V$ be a vector space of dimension $n$ over $\bbF_{q^2}$ equipped with a nondegenerate Hermitian form $\beta$, and let $\perp$ denote the perpendicularity with respect to $\beta$.

If $n=2m$ is even, then the unitary space $V$ has a standard basis $e_1,f_1,\dots,e_m,f_m$ as in~\cite[2.2.3]{LPS1990}. In this case, let $U$ be the subspace of $V$ spanned by $e_1,\dots,e_m$, let $U_1$ be the subspace of $V$ spanned by $e_2,\dots,e_m$, let $W$ be the subspace of $V$ spanned by $f_1,\dots,f_m$, let $\phi\in\GaU(V)$ such that 
\[
\phi\colon a_1e_1+b_1f_1+\dots+a_me_m+b_mf_m\mapsto a_1^pe_1+b_1^pf_1+\dots+a_m^pe_m+b_m^pf_m
\]
for $a_1,b_1\dots,a_m,b_m\in\bbF_{q^2}$ (notice that this definition of $\phi$ is different from that in~\cite[1.7.1]{BHR2013}, see~\cite{BHR2009}), and let $\gamma$ be the involution in $\GU(V)$ swapping $e_i$ and $f_i$ for all $i\in\{1,\dots,m\}$. Then 
\[
\det(\gamma)=(-1)^m.
\]
By abuse of notation, we also let $\phi$ denote the corresponding elements in $\Aut(L)$ and $\Out(L)$.
Moreover, let $\lambda\in\bbF_{q^2}$ with $\lambda+\lambda^q=1$ (note that such $\lambda$ exists as the trace of the field extension $\bbF_{q^2}/\bbF_q$ is surjective), and let $v=e_1+\lambda f_1$. Then 
\[
\beta(v,v)=\lambda+\lambda^q=1, 
\]
and so $v$ is nonsingular. From~\cite[3.6.2]{Wilson2009} we see that $\SU(V)_U=\Pa_m[\SU(V)]$ has a subgroup $R{:}T$, where 
\[
R=q^{m^2} 
\]
is the kernel of $\SU(V)_U$ acting on $U$, and 
\[
T=\SL_m(q^2)
\] 
stabilizes both $U$ and $W$ (the action of $T$ on $U$ determines that on $W$ in the way described in~\cite[Lemma~2.2.17]{BG2016}).

\section{Infinite families of $(X,Y)$ in Table~\ref{TabUnitary}}\label{SecUnitary01}

In this subsection we construct the factor pairs $(X,Y)$ in Rows~1--8 of Table~\ref{TabUnitary}.

\begin{lemma}\label{LemUnitary01}
Let $G=\SU(V)=\SU_n(q)$ with $n=2m$, let $M=R{:}T$, and let $K=G_v$. Then the following statements hold:
\begin{enumerate}[{\rm (a)}]
\item the induced group by the action of $M\cap K$ on $U$ is $\SL(U)_{U_1,e_1+U_1}=q^{2m-2}{:}\SL_{m-1}(q^2)$;
\item the kernel of $M\cap K$ acting on $U$ is $R\cap K=q^{(m-1)^2}$;
\item $T\cap K=\SL_{m-1}(q^2)$.
\end{enumerate}
\end{lemma}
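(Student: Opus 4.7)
The plan is to work in matrix coordinates relative to the basis $e_1,\dots,e_m,f_1,\dots,f_m$, with respect to which the Hermitian form is represented by $J=\begin{pmatrix}0&I\\I&0\end{pmatrix}$ and the Frobenius $\sigma\colon x\mapsto x^q$ acts entrywise. A direct computation of the isometry relation $g^TJg^\sigma=J$ for $g=\begin{pmatrix}A&B\\0&D\end{pmatrix}$ stabilizing $U$ yields $D=A^{-T\sigma}$ together with $(A^{-1}B)^{T\sigma}=-A^{-1}B$; combined with $\det g=1$ and the identification $T\cong\SL_m(q^2)$, this parametrizes $M=R{:}T$ by pairs $(A,B)$ with $A\in\SL_m(q^2)$ and $A^{-1}B$ skew-Hermitian. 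In this parametrization $R$ corresponds to $A=I$ (so $B$ itself is skew-Hermitian) and $T$ corresponds to $B=0$. The action of $g$ on $U$ is represented by $A$ in the basis $e_1,\dots,e_m$, so $R$ is exactly the kernel of $M\to\SL(U)$.

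Next I would translate $g\in K$, i.e.\ $gv=v$ with $v=e_1+\lambda f_1$, into matrix form. Expanding
\[
g(e_1+\lambda f_1)=A_{*,1}+\lambda B_{*,1}+\lambda(A^{-T\sigma})_{*,1}
\]
(where $A_{*,1}$ denotes the first column) and equating the $U$- and $W$-components with $e_1+\lambda f_1$ gives two conditions: (i) $(A^{-T\sigma})_{*,1}=f_1$, which says that the first row of $A^{-1}$---equivalently of $A$---is $(1,0,\dots,0)$; and (ii) $B_{*,1}=(e_1-A_{*,1})/\lambda$, which determines the first column of $B$ in terms of $A$.

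Parts (b) and (c) then fall out quickly. For (c), specialize to $B=0$: condition (ii) forces $A_{*,1}=e_1$, and combined with (i) this yields $A=\mathrm{diag}(1,A')$ with $A'\in\SL_{m-1}(q^2)$, so $T\cap K=\SL_{m-1}(q^2)$. For (b), specialize to $A=I$: condition (i) is automatic and (ii) gives $B_{*,1}=0$; the skew-Hermitian relation $B=-B^{T\sigma}$ forces the first row of $B$ to vanish as well, and the remaining lower-right $(m-1)\times(m-1)$ submatrix is a free skew-Hermitian matrix over $\bbF_{q^2}$, of which there are exactly $q^{(m-1)^2}$. Since the kernel of $M\cap K$ acting on $U$ is $R\cap K$, this gives (b). For part (a), condition (i) shows the image of $M\cap K$ in $\SL(U)$ is contained in the set of $A\in\SL_m(q^2)$ whose first row is $(1,0,\dots,0)$. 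Conversely, for any such $A$ one produces a valid $g\in M\cap K$ by setting $D=A^{-1}B$, letting (ii) determine the first column of $D$ (which automatically gives $D_{11}=0$), filling in the first row of $D$ by skew-Hermitian symmetry, and choosing the lower-right $(m-1)\times(m-1)$ block of $D$ arbitrarily (say zero). I would then recognize the set of such $A$ as $\SL(U)_{U_1,e_1+U_1}$, since ``first row $(1,0,\dots,0)$'' is equivalent to $A(U_1)\subseteq U_1$ and $A(e_1)\in e_1+U_1$; the block form $\begin{pmatrix}1&0\\c&A''\end{pmatrix}$ with $c\in\bbF_{q^2}^{m-1}$ and $A''\in\SL_{m-1}(q^2)$ exhibits the structure $q^{2m-2}{:}\SL_{m-1}(q^2)$.

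The main obstacle is purely bookkeeping: to keep the transpose--Frobenius relation $D=A^{-1}B$ straight so that the two necessary conditions (i) and (ii) are pinned down correctly, and to verify in part (a) that one really can extend an allowed $A$ to a valid $g$---this requires checking that the forced entry $D_{11}=0$ is consistent with the skew-Hermitian condition (which it trivially is). Once these details are in place, everything reduces to routine linear algebra over $\bbF_{q^2}$.
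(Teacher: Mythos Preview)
Your approach is correct and genuinely different from the paper's.  The paper argues geometrically: for~(b) it observes that any $r\in R\cap K$ fixes $\langle e_1,f_1\rangle_{\bbF_{q^2}}$ pointwise and then quotes the structure of unipotent radicals in $\SU_{2m-2}(q)$; for~(a) it shows $(M\cap K)^U\leqslant\SL(U)_{U_1,e_1+U_1}$ by intersecting $U$ with $v^\perp$ and using the form to pin down the leading coefficient, and then obtains equality by the order bound $|M\cap K|\geqslant|M||K|/|G|$; for~(c) it splits $v^t$ into its $U$- and $W$-components.  By contrast you parametrize $M$ explicitly by pairs $(A,B)$ with $A\in\SL_m(q^2)$ and $A^{-1}B$ skew-Hermitian, translate $gv=v$ into the two column conditions, and read off all three parts by direct linear algebra; in particular your surjectivity in~(a) is constructive rather than by counting.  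Your route is more elementary and self-contained (no appeal to~\cite[3.6.2]{Wilson2009} or to the index $|G|/|K|$), at the cost of some matrix bookkeeping; the paper's route is shorter and more conceptual but leans on external structure results and an order computation.

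Two small points to clean up when you write it out.  First, you reuse the letter $D$ in part~(a) for the skew-Hermitian matrix $A^{-1}B$, clashing with the block $D=A^{-T\sigma}$ from your setup; pick a fresh symbol.  Second, in part~(b) you write ``the skew-Hermitian relation $B=-B^{T\sigma}$'', which is only correct because $A=I$ there; it would be clearer to say so explicitly.  Neither affects the mathematics.
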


\begin{proof} 
We first calculate $R\cap K$, the kernel of $M\cap K$ acting on $U$. For each $r\in R\cap K$, since $r$ fixes $e_1$ and $v$, we deduce that $r$ fixes $\langle e_1,v\rangle_{\bbF_{q^2}}=\langle e_1,f_1\rangle_{\bbF_{q^2}}$ pointwise. 
Hence $R\cap K$ is isomorphic to the pointwise stabilizer of $U_1$ in $\SU(\langle e_2,f_2,\dots,e_m,f_m\rangle_{\bbF_{q^2}})$.
Then by~\cite[3.6.2]{Wilson2009} we have $R\cap K=q^{(m-1)^2}$.

Since $K$ fixes $v$, it stabilizes $v^\perp$. 
Hence $M\cap K$ stabilizes $U\cap v^\perp=\langle e_2,\dots,e_m\rangle_{\bbF_{q^2}}=U_1$.
For arbitrary $h\in M\cap K$, write $e_1^h=\mu e_1+e$ with $e\in U_1$. Then
\[
(\lambda f_1)^h=(v-e_1)^h=v^h-e_1^h=v-(\mu e_1+e)=(1-\mu)e_1-e+\lambda f_1,
\] 
and so $\beta(e_1^h,(\lambda f_1)^h)=\beta(\mu e_1,\lambda f_1)$. Since $\lambda\neq0$ and $h$ preserves $\beta$, we obtain $\mu=1$.
It follows that $M\cap K$ stabilizes $e_1+U_1$, and so the induced group of $M\cap K$ on $U$ is contained in $\SL(U)_{U_1,e_1+U_1}$, that is, $(M\cap K)^U\leqslant\SL(U)_{U_1,e_1+U_1}=q^{2m-2}{:}\SL_{m-1}(q^2)$. Now 
\[
M\cap K=(R\cap K).(M\cap K)^U\leqslant(R\cap K).\SL(U)_{U_1,e_1+U_1}
\] 
while
\begin{align*}
|M\cap K|\geqslant\frac{|M||K|}{|G|}&=\frac{|q^{m^2}{:}\SL_m(q^2)||\SU_{m-1}(q)|}{|\SU_m(q)|}\\
&=q^{m^2-1}|\SL_{m-1}(q^2)|=|R\cap K||\SL(U)_{U_1,e_1+U_1}|.
\end{align*}
Thus we obtain $(M\cap K)^U=\SL(U)_{U_1,e_1+U_1}=q^{2m-2}{:}\SL_{m-1}(q^2)$.

For each $t\in T\cap K$, we have $e_1^t\in U^t=U$ and $f_1^t\in W^t=W$, and then it follows from 
\[
e_1^t+\lambda f_1^t=(e_1+\lambda f_1)^t=v^t=v=e_1+\lambda f_1
\] 
that $e_1^t=e_1$ and $f_1^t=f_1$. Hence we conclude that
\[
T\cap K=T_{e_1,f_1}=T_{e_1,U_1}=\SL_{m-1}(q^2).\qedhere
\]
\end{proof}

The following lemma gives the factor pairs $(X,Y)$ in Rows 1--3 of~\ref{TabUnitary}.

\begin{lemma}\label{LemUnitary02}
Let $G=\SU(V)=\SU_n(q)$ with $n=2m$, let $H=R{:}S\leqslant G_U$ with $S=\SL_a(q^{2b})$ ($m=ab$), $\Sp_a(q^{2b})$ ($m=ab$) or $\G_2(q^{2b})$ ($m=6b$, $q$ even), let $K=G_v$, let $Z=\overline{G}$, let $X=\overline{H}$, and let $Y=\overline{K}$. Then 
\[
H\cap K=
\begin{cases}
(q^{(m-1)^2}.q^{2m-2b}){:}\SL_{a-1}(q^{2b})&\textup{if }S=\SL_a(q^{2b})\\
(q^{(m-1)^2}.[q^{2m-2b}]){:}\Sp_{a-2}(q^{2b})&\textup{if }S=\Sp_a(q^{2b})\\
(q^{(m-1)^2}.q^{4b+6b}){:}\SL_2(q^{2b})&\textup{if }S=\G_2(q^{2b}),
\end{cases}
\]
and $Z=XY$ with $Z=\PSU_n(q)$, $X=\lefthat(q^{m^2}{:}S)$ and $Y\cong K=\SU_{n-1}(q)$.
\end{lemma}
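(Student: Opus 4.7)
The plan is to imitate Lemma~\ref{LemUnitary01} with the Levi factor $T = \SL_m(q^2)$ replaced by the smaller subgroup $S$, and then close by an order comparison. Write $H = R{:}S$ with $R$ the unipotent radical of $G_U$ and take $h = rs \in H$ with $r \in R$ and $s \in S$. Since $r$ fixes $U$ pointwise and sends each $f_j$ to $f_j + u_j(r)$ for some $u_j(r) \in U$, while $s \leqslant T$ preserves $U$ and $W$ separately, applying $h$ to $v = e_1 + \lambda f_1$ yields
\[
v^{rs} = e_1^s + \lambda u_1(r)^s + \lambda f_1^s.
\]
Equating this with $v$ splits into a $W$-component forcing $f_1^s = f_1$ and a $U$-component determining $u_1(r)$ uniquely in terms of $s$. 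In particular, the kernel of the projection $\pi\colon H \cap K \to S$, $rs \mapsto s$, is $R \cap K$ of order $q^{(m-1)^2}$ by Lemma~\ref{LemUnitary01}(b), and the image of $\pi$ lies in the stabilizer $S_{f_1}$ of $f_1$ under the natural action of $S$ on $W$.

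I would then identify $S_{f_1}$ in each of the three cases. Standard vector-stabilizer computations give $S_{f_1} = q^{2b(a-1)}{:}\SL_{a-1}(q^{2b})$ for $S = \SL_a(q^{2b})$ and $S_{f_1} = [q^{2b(a-1)}]{:}\Sp_{a-2}(q^{2b})$ for $S = \Sp_a(q^{2b})$, using $2b(a-1) = 2m - 2b$. For $S = \G_2(q^{2b})$ with $q$ even, I would use the transitivity of $\G_2(q^{2b})$ on nonzero vectors of its $6$-dimensional module to obtain a point stabilizer of shape $q^{4b+6b}{:}\SL_2(q^{2b})$. Together these give the upper bound
\[
|H \cap K| \leqslant q^{(m-1)^2}\,|S_{f_1}|,
\]
matching the orders displayed in the statement.

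To finish, I would compute $|H||K|/|G|$ via $|H| = q^{m^2}|S|$, $|K| = |\SU_{2m-1}(q)|$, and the standard order formula for $\SU_n(q)$, and verify case by case that it equals $q^{(m-1)^2}|S_{f_1}|$. Combined with the elementary bound $|H \cap K| \geqslant |H||K|/|G|$ coming from $HK \subseteq G$, this forces equality throughout, so $G = HK$, the map $\pi$ is surjective, and $|H \cap K|$ has the stated order. The semidirect complement in the displayed structure is supplied by $S_{e_1,f_1}$, which equals $\SL_{a-1}(q^{2b})$, $\Sp_{a-2}(q^{2b})$, or $\SL_2(q^{2b})$ respectively, and which already sits in $H \cap K$ through $r = 1$ with trivial intersection with the unipotent normal subgroup. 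Reducing modulo scalars then delivers $Z = XY$. The main technical hurdle is the $\G_2$ case: justifying transitivity on nonzero vectors in characteristic $2$ and pinning down the precise unipotent shape of the stabilizer needs a dedicated input, whereas the $\SL$ and $\Sp$ cases use standard parabolic structure.
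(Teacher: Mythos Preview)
Your approach is correct and essentially the same as the paper's. The only cosmetic difference is that you work on the $W$-side (the image of $\pi$ lands in $S_{f_1}$), whereas the paper works on the $U$-side and identifies $(H\cap K)R/R$ with $S_{U_1,e_1+U_1}$; since for $s\in T$ fixing $f_1$ is equivalent to stabilising $U_1$ and $e_1+U_1$, these are literally the same subgroup of $S$. The paper obtains $(H\cap K)/(R\cap K)\cong S_{U_1,e_1+U_1}$ directly from Lemma~\ref{LemUnitary01}(a) (so surjectivity of your $\pi$ onto $S_{f_1}$ comes for free) and then cites \cite[Lemmas~4.2,~4.3,~4.5]{LWX(2)} for the three stabiliser structures, whereas you reach the same conclusion by first bounding the image and then forcing equality via $|H||K|/|G|$; both routes finish with the same order comparison to get $G=HK$.
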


\begin{proof} 
It is clear that $Z=\PSU_n(q)$, $X=\lefthat(q^{m^2}{:}S)$, and $Y\cong K=\SU_{n-1}(q)$.
Let $M=R{:}T$. By Lemma~\ref{LemUnitary01} we have $R\cap K=q^{(m-1)^2}$ and 
\[
(M\cap K)^U=\SL(U)_{U_1,e_1+U_1}=(M_{U_1,e_1+U_1})^U.
\] 
Then $M_{U_1,e_1+U_1}=(M\cap K)R$ as $R$ is the kernel of $M$ acting on $U$.
This implies that $H_{U_1,e_1+U_1}=(H\cap K)R$, and so
\[
(H\cap K)/(R\cap K)\cong(H\cap K)R/R=H_{U_1,e_1+U_1}/R=S_{U_1,e_1+U_1}R/R\cong S_{U_1,e_1+U_1}.
\]
Hence $H\cap K=(R\cap K).S_{U_1,e_1+U_1}=q^{(m-1)^2}.S_{U_1,e_1+U_1}$. Moreover, by~\cite[Lemmas~4.2,~4.3 and~4.5]{LWX-Linear} we have
\[
S_{U_1,e_1+U_1}=
\begin{cases}
q^{2m-2b}{:}\SL_{a-1}(q^{2b})&\textup{if }S=\SL_a(q^{2b})\\
[q^{2m-2b}]{:}\Sp_{a-2}(q^{2b})&\textup{if }S=\Sp_a(q^{2b})\\
q^{4b+6b}{:}\SL_2(q^{2b})&\textup{if }S=\G_2(q^{2b}).
\end{cases}
\]
This proves the conclusion of this lemma on $H\cap K$, and implies that
\[
\frac{|H|}{|H\cap K|}=\frac{|q^{m^2}{:}S|}{|q^{(m-1)^2}.S_{U_1,e_1+U_1}|}=q^{2m-1}(q^{2m}-1)=\frac{|\SU_{2m}(q)|}{|\SU_{2m-1}(q)|}=\frac{|G|}{|K|}.
\]
Thus $G=HK$, and so $Z=\overline{G}=\overline{H}\,\overline{K}=XY$.
\end{proof}


The factor pairs $(X,Y)$ in Rows~4--6 of Table~\ref{TabUnitary} are constructed in the next four lemmas.
Note that if $q$ is even then $\gamma\in\SU(V)$ as $\det(\gamma)=1$.

\begin{lemma}\label{LemUnitary03}
Let $G=\SU(V)=\SU_n(2)$ with $n=2m$ and $q=2$, let $H=T{:}\langle\gamma\rangle$, let $K=G_v$, let $Z=\overline{G}$, let $X=\overline{H}$, and let $Y=\overline{K}$. Then $H\cap K=\SL_{m-1}(4)$, and $Z=XY$ with $Z=\PSU_n(2)$, $X=(\SL_m(4).2)/(m,3)$ and $Y\cong K=\SU_{n-1}(2)$.
\end{lemma}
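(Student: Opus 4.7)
The plan is to follow the same scheme as in Lemma~\ref{LemUnitary02}: first determine $H \cap K$, then verify the counting identity $|H||K| = |G||H \cap K|$ to deduce $G = HK$, and finally pass to quotients modulo scalars to identify $X$, $Y$ and $Z$.

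For $H \cap K$, Lemma~\ref{LemUnitary01}(c) already supplies $T \cap K = \SL_{m-1}(4)$, so the only remaining task is to show that the coset $T\gamma$ contributes nothing. Suppose $t\gamma \in K$ for some $t \in T$. Then $v^t = v^{\gamma^{-1}} = v^\gamma = \lambda e_1 + f_1$, and since $T$ preserves both $U$ and $W$, matching the $U$- and $W$-components forces $e_1^t = \lambda e_1$ and $f_1^t = \lambda^{-1} f_1$. Writing $A \in \SL_m(4)$ for the matrix of $t$ on $U$, the first column of $A$ is $(\lambda, 0, \ldots, 0)^T$; the action of $t$ on $W$ is then given by $(A^{-T})^{(q)}$ (by the prescription in~\cite[Lemma~2.2.17]{BG2016}, or directly from preservation of $\beta$), whose first column is $(\lambda^{-q}, 0, \ldots, 0)^T$. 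Equating yields $\lambda^{q-1} = 1$ and hence $\lambda \in \bbF_q = \bbF_2$, contradicting the relation $\lambda + \lambda^q = 1$ in characteristic $2$. Therefore $H \cap K = T \cap K = \SL_{m-1}(4)$.

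The counting step is a direct computation:
\[
\frac{|H|}{|H \cap K|} = \frac{2\,|\SL_m(4)|}{|\SL_{m-1}(4)|} = 2 \cdot 4^{m-1}(4^m - 1) = 2^{2m-1}(2^{2m} - 1) = \frac{|\SU_{2m}(2)|}{|\SU_{2m-1}(2)|} = \frac{|G|}{|K|},
\]
so $G = HK$ and consequently $Z = \overline{G} = \overline{H}\,\overline{K} = XY$. To identify the three groups: the scalar subgroup of $\SU_{2m}(2)$ has order $\gcd(2m, 3) = (m, 3)$ and is contained in $T$ (its elements act as $\mu I$ on both $U$ and $W$, with $\mu^{q+1} = 1$ automatic when $q = 2$), whereas $T\gamma$ contains no scalar; this gives $Z = \PSU_{2m}(2)$ and $X = \overline{T\langle\gamma\rangle} = (\SL_m(4).2)/(m, 3)$. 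Finally, since any scalar fixing the nonzero vector $v$ is trivial, $K$ meets the scalars trivially and so $Y \cong K = \SU_{2m-1}(2)$.

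The main obstacle is expected to be the second step above, namely ruling out contributions from the coset $T\gamma$ to $H \cap K$: this is not purely formal, as it rests on the explicit inverse-transpose-conjugate description of the action of $T$ on $W$ together with the special characteristic-$2$ arithmetic of $\lambda$. The remaining steps are routine index comparisons and straightforward identifications of central quotients.
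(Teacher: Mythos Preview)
Your proof is correct and follows the same scheme as the paper: rule out elements of $T\gamma$ from $K$, invoke Lemma~\ref{LemUnitary01}(c) for $T\cap K$, and finish by the index computation. One small point: from $e_1^t=\lambda e_1$ alone you only know the $(1,1)$-entry of $(A^{-T})^{(q)}$ is $\lambda^{-q}$, not the full first column (the remaining entries depend on the first row of $A^{-1}$, which is not determined); however, comparing that single entry with the $(1,1)$-entry $\lambda^{-1}$ coming from $f_1^t=\lambda^{-1}f_1$ already gives $\lambda^{q-1}=1$, so your conclusion stands. The paper reaches the same contradiction more directly via $\beta(e_1^t,f_1^t)=\beta(e_1,f_1)$ and sesquilinearity, which is exactly the alternative you mention parenthetically.
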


\begin{proof}
Clearly, $Z=\PSU_n(2)$, $X=(\SL_m(4).2)/(m,3)$, and $Y\cong K=\SU_{n-1}(2)$. Suppose that there exists $t\in T$ with $\gamma t\in K=G_v$. Then
\[
e_1+\lambda f_1=v=v^{\gamma t}=(e_1+\lambda f_1)^{\gamma t}=(\lambda e_1+f_1)^t=\lambda e_1^t+f_1^t.
\]
Since $e_1^t\in U^t=U$ and $f_1^t\in W^t=W$, it follows that $e_1=\lambda e_1^t$ and $\lambda f_1=f_1^t$. Consequently,
\[
\lambda^q=\beta(e_1,\lambda f_1)=\beta(\lambda e_1^t,f_1^t)=\lambda\beta(e_1^t,f_1^t)=\lambda\beta(e_1,f_1)=\lambda.
\]
This implies that $\lambda+\lambda^q=2\lambda=0$, contradicting the condition $\lambda+\lambda^q=1$.

Thus we conclude that $H\cap K=T\cap K$. Then it follows from Lemma~\ref{LemUnitary01} that $H\cap K=\SL_{m-1}(4)$, and so
\[
\frac{|G|}{|K|}=\frac{|\SU_n(2)|}{|\SU_{n-1}(2)|}=2^{n-1}(2^n-1)=2\cdot4^{m-1}(4^m-1)=\frac{|\SL_m(4).2|}{|\SL_{m-1}(4)|}=\frac{|H|}{|H\cap K|}.
\]
This implies that $G=HK$, which leads to $Z=\overline{G}=\overline{H}\,\overline{K}=XY$.
\end{proof}

\begin{lemma}\label{LemUnitary04}
Let $G=\SU(V){:}\langle\phi\rangle=\SU_n(q){:}(2f)$ with $n=2m$ and $q\in\{2,4\}$, let $H=T{:}\langle\rho\rangle$ with $\rho\in\{\phi,\phi\gamma^{2/f}\}$, let $K=G_v$, let $Z=\overline{G}$, let $X=\overline{H}$, and let $Y=\overline{K}$. Then $H\cap K=\SL_{m-1}(q^2)$, and $Z=XY$ with $Z=\PSU_n(q).(2f)$, $X=(\SL_m(q^2).(2f))/(m,q+1)$ and $Y\cong K=\SU_{n-1}(q).(2f)$.
\end{lemma}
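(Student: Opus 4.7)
My plan follows the template of Lemma~\ref{LemUnitary03}, with two new wrinkles: the field automorphism $\phi$ of order $2f$ replaces the involution $\gamma$ used there, and for one choice of $\rho$ this is further twisted by $\gamma$. First I identify the ambient structures. Since $\phi$ projects to a generator of the outer field-automorphism subgroup of $\PSU_n(q)$ of order $2f$, we have $Z = \overline{G} = \PSU_n(q).(2f)$ and $X = \overline{H} = (\SL_m(q^2).(2f))/(m,q+1)$. For $K$, any vector $e_1 + \mu f_1$ with $\mu + \mu^q = 1$ is nonsingular of $\beta$-norm $1$, hence $\SU(V)$-conjugate to $v$ by transitivity of $\SU(V)$ on nonsingular vectors of fixed norm. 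Since $v^{\phi^{-i}} = e_1 + \lambda^{p^{-i}} f_1$ is of this form for every $i$, there exists $u_i \in \SU(V)$ with $u_i\phi^i \in K$, so $K$ surjects onto $G/\SU(V)$ and $Y \cong K \cong \SU_{n-1}(q).(2f)$.

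The crucial step is to establish $H \cap K = T \cap K = \SL_{m-1}(q^2)$, with the last equality coming from Lemma~\ref{LemUnitary01}(c). Since $\phi$ and $\gamma$ commute on $V$ (both act diagonally on each hyperbolic plane $\langle e_j, f_j\rangle$), every element of $H$ has the form $t\phi^i\gamma^{\epsilon i}$ with $t \in T$, $0 \leqs i < 2f$, and $\epsilon \in \{0, 2/f\}$ recording the choice of $\rho$. Assuming such an element with $i \ne 0$ lies in $K$ gives $v^t = v^{\gamma^{-\epsilon i}\phi^{-i}}$. Splitting into the sub-cases $\gamma^{\epsilon i} = 1$ and $\gamma^{\epsilon i} = \gamma$ and reading off the $U$- and $W$-components (possible since $t$ stabilizes both $U$ and $W$) pins down $e_1^t$ and $f_1^t$ as specific scalar multiples of $e_1$ and $f_1$; invariance of $\beta$ under $t$ then forces $\lambda^e = 1$ for a specific nonzero exponent $e$ modulo $q^2 - 1$ depending on the sub-case and on $i$.

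The main obstacle is verifying $\lambda^e \ne 1$ in every instance. The hypothesis $\lambda + \lambda^q = 1$ with $q$ even forces $\lambda \notin \bbF_q$, so the order of $\lambda$ in $\bbF_{q^2}^*$ is a divisor of $q^2 - 1$ that does not divide $q - 1$, namely exactly $3$ for $q = 2$ and either $5$ or $15$ for $q = 4$. A direct check over the (at most six) configurations of $(q, f, i, \text{sub-case})$ reduces $e$ modulo $q^2 - 1$ to a value coprime to every admissible order of $\lambda$, yielding the required contradiction. Thus $H \cap K = \SL_{m-1}(q^2)$, and a routine order count gives $|H||K|/|H \cap K| = (2f/q)|G|$, which equals $|G|$ precisely for $(q,f) \in \{(2,1),(4,2)\}$---explaining the hypothesis on $q$---so $G = HK$ and consequently $Z = XY$.
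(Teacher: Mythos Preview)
Your approach is correct and follows the same template as the paper, but you work harder than necessary in the key step. The paper observes that if $H\cap K>T\cap K$, then $(H\cap K)T/T$ is a nontrivial subgroup of the cyclic group $\langle\rho\rangle\cong C_{2f}$, hence contains its unique involution $\rho^f$. Since $\gamma^2=1$ one has $\rho^f=\phi^f$ regardless of whether $\rho=\phi$ or $\rho=\phi\gamma^{2/f}$. So it suffices to rule out a single coset: if $\phi^f t\in K$ for some $t\in T$, then comparing $U$- and $W$-components of $v=v^{\phi^f t}$ gives $e_1^t=e_1$ and $\lambda f_1=\lambda^q f_1^t$, and then $\beta$-invariance yields $\lambda^q=\lambda$ directly, contradicting $\lambda+\lambda^q=1$ in characteristic~$2$. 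This handles both values of $q$ and both choices of $\rho$ in one uniform line, with no need to enumerate the six or seven configurations you list.

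One small correction to your argument as written: in the sub-case $q=4$, $i=2$ (which is exactly the paper's case $\phi^f t$), the exponent you obtain is $e=12$, and $\gcd(12,15)=3$, so $e$ is \emph{not} coprime to the admissible order $15$. The contradiction still goes through, since $\lambda^{12}=1$ forces the order of $\lambda$ to divide $\gcd(12,15)=3\mid q-1$, hence $\lambda\in\bbF_q$; but the correct formulation is ``$\gcd(e,q^2-1)$ divides $q-1$'' rather than ``$e$ is coprime to every admissible order''. With that adjustment your case-check is valid.
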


\begin{proof}
Clearly, $Z=\PSU_n(q).(2f)$, $X=(\SL_m(q^2).(2f))/(m,q+1)$, and $Y\cong K=\SU_{n-1}(q).(2f)$. Suppose $H\cap K>T\cap K$.
Then there exists $t\in T$ such that $\phi^ft\in K=G_v$. This implies that
\[
e_1+\lambda f_1=v=v^{\phi^ft}=(e_1+\lambda f_1)^{\phi^ft}=(e_1+\lambda^qf_1)^t=e_1^t+\lambda^qf_1^t.
\]
Since $e_1^t\in U^t=U$ and $f_1^t\in W^t=W$, it follows that $e_1=e_1^t$ and $\lambda f_1=\lambda^qf_1^t$. Hence
\[
\lambda^q=\beta(e_1,\lambda f_1)=\beta(e_1^t,\lambda^qf_1^t)=\lambda\beta(e_1^t,f_1^t)=\lambda\beta(e_1,f_1)=\lambda,
\]
which leads to $\lambda+\lambda^q=2\lambda=0$, contradicting the condition $\lambda+\lambda^q=1$.

Thus we conclude that $H\cap K=T\cap K$. Then it follows from Lemma~\ref{LemUnitary01} that $H\cap K=\SL_{m-1}(q^2)$. As $q\in\{2,4\}$, this implies that
\[
\frac{|G|}{|K|}=\frac{|\SU_n(q){:}(2f)|}{|\SU_{n-1}(q).(2f)|}=q^{n-1}(2^n-1)=2f\cdot q^{2m-2}(q^{2m}-1)=\frac{|\SL_m(q^2).(2f)|}{|\SL_{m-1}(q^2)|}=\frac{|H|}{|H\cap K|}.
\]
Hence $G=HK$, and so $Z=\overline{G}=\overline{H}\,\overline{K}=XY$.
\end{proof}




\begin{lemma}\label{LemUnitary05}
Let $G=\SU(V)=\SU_n(2)$ with $q=2$ and $n=2m$ for some even $m$, let $H=S{:}\langle\gamma\rangle$ with $S=\Sp_m(4)<T$, let $K=G_v$, let $Z=\overline{G}$, let $X=\overline{H}$, and let $Y=\overline{K}$. Then $H\cap K=\Sp_{m-2}(4)$, and $Z=XY$ with $Z=\PSU_n(2)$, $X=\Sp_m(4).2$ and $Y\cong K=\SU_{n-1}(2)$.
\end{lemma}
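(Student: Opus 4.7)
The plan is to mirror the proof of Lemma~\ref{LemUnitary03}, replacing $T=\SL_m(4)$ by its subgroup $S=\Sp_m(4)$, and then analyse how $S\cap K$ sits inside $T\cap K$. The identifications $Z=\PSU_n(2)$, $X=\Sp_m(4).2$ and $Y\cong K=\SU_{n-1}(2)$ are immediate from the definitions of the groups.

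First I handle the outer involution. Every element of $H$ is either of the form $s$ or $\gamma s$ with $s\in S\leqslant T$. Assume $\gamma s\in K$; the calculation in Lemma~\ref{LemUnitary03}, which only uses that $s\in T$ stabilises $U$ and $W$, then yields $\lambda^q=\lambda$ from $v^{\gamma s}=v$, contradicting $\lambda+\lambda^q=1$ in characteristic two. Hence $H\cap K=S\cap K$.

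Next I compute $S\cap K$. Since every $s\in S\leqslant T$ preserves the decomposition $V=U\oplus W$, we have $s(v)=v$ if and only if $s(e_1)=e_1$ and $s(f_1)=f_1$, so Lemma~\ref{LemUnitary01}(c) gives
\[
S\cap K=S\cap T_{e_1,f_1}=S_{e_1,U_1},
\]
with $T_{e_1,f_1}=T_{e_1,U_1}\cong\SL_{m-1}(4)$ consisting of the elements of $T$ whose matrices on $U$ are block-diagonal with top-left block $1$ and bottom-right block in $\SL_{m-1}(4)$. Using Lemmas~\ref{LemXia03} and~\ref{LemXia04} to replace $H$ by a suitable $G$-conjugate, I may assume that $e_1,\ldots,e_m$ is itself a symplectic basis for the alternating form $\omega$ preserved by $S$, with $(e_1,e_2)$ and each $(e_{2i-1},e_{2i})$ for $i\geqslant 2$ a hyperbolic pair. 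Then $S_{e_1}$ is the standard point-stabiliser parabolic $Q{:}L$ of $S$, where the unipotent radical $Q$ has order $4^{m-1}$ and the Levi complement $L\cong\Sp_{m-2}(4)$ fixes $e_1$ and $e_2$ pointwise while acting as $\Sp_{m-2}(4)$ on $\langle e_3,\ldots,e_m\rangle$. The additional requirement $s(U_1)=U_1$ forces the first row of the matrix of $s$ on $U$ to equal $(1,0,\ldots,0)$; a direct matrix check (using that $\omega|_{U_1}$ has radical $\langle e_2\rangle$ and induces a nondegenerate alternating form on $U_1/\langle e_2\rangle$) shows that this kills the whole of $Q$, leaving $S\cap K=L\cong\Sp_{m-2}(4)$.

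Given this, the order count
\[
\frac{|H|}{|H\cap K|}=\frac{2|\Sp_m(4)|}{|\Sp_{m-2}(4)|}=2\cdot 4^{m-1}(4^m-1)=2^{2m-1}(2^{2m}-1)=\frac{|G|}{|K|}
\]
yields $G=HK$, and passing to quotients modulo scalars gives $Z=\overline{G}=\overline{H}\,\overline{K}=XY$. The only step that is not a direct citation of earlier lemmas is the identification of $S_{e_1,U_1}$ with the Levi $\Sp_{m-2}(4)$; this rests on the specific compatibility between the symplectic form on $U$ and the basis $e_1,\ldots,e_m$, and is the main technical point of the proof.
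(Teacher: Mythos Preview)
Your argument follows the same skeleton as the paper's proof: first eliminate the coset $\gamma S$ from $H\cap K$ (exactly as in Lemma~\ref{LemUnitary03}), then compute $S\cap K$, then do the order count. The difference lies in how $S\cap K$ is determined. The paper simply observes that $H\cap K=H\cap\big((T{:}\langle\gamma\rangle)\cap K\big)=S\cap\SL_{m-1}(4)$ by Lemma~\ref{LemUnitary03}, and then cites \cite[Lemma~4.6]{LWX(2)} for $S\cap\SL_{m-1}(4)=\Sp_{m-2}(4)$. You instead carry out this intersection by hand, identifying $S_{e_1,U_1}$ with the Levi complement $\Sp_{m-2}(4)$ of the point stabiliser in $\Sp_m(4)$. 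Your matrix analysis is correct: with $(e_1,e_2),(e_3,e_4),\dots$ hyperbolic pairs, the condition that the first row be $(1,0,\dots,0)$ forces the parameters $d_1,c_3,\dots,c_m$ of a unipotent element to vanish, and non\-degeneracy of the form on $\langle e_3,\dots,e_m\rangle$ then kills the remaining $d_j$. So your route is more self-contained, at the cost of a short explicit computation that the paper outsources.

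There is, however, a genuine (if minor) gap in your justification for choosing the symplectic basis. You invoke Lemmas~\ref{LemXia03} and~\ref{LemXia04} to replace $H$ by a $G$-conjugate, but both of those lemmas take the factorization $G=HK$ (equivalently, that $(H,K)$ is a factor pair) as a \emph{hypothesis}, which is precisely what you are trying to prove. The remedy is easy: since the lemma is constructing a specific factor pair for Table~\ref{TabUnitary}, simply \emph{define} $S$ from the outset as the stabiliser in $T$ of the alternating form making $(e_1,e_2),(e_3,e_4),\dots$ hyperbolic pairs (and check that $\gamma$, which swaps $U$ and $W$, normalises this particular $S$). No conjugation argument is then needed, and the rest of your proof goes through unchanged.
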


\begin{proof}
Clearly, $Z=\PSU_n(2)$, $X=\Sp_m(4).2$, and $Y\cong K=\SU_{n-1}(2)$. Since $H<T{:}\langle\gamma\rangle$, we derive from Lemmas~\ref{LemUnitary03} and~\cite[Lemma~4.6]{LWX-Linear} that
\[
H\cap K=H\cap((T{:}\langle\gamma\rangle)\cap K)=H\cap\SL_{m-1}(4)=S\cap\SL_{m-1}(4)=\Sp_{m-2}(4).
\]
Therefore,
\[
\frac{|G|}{|K|}=\frac{|\SU_n(2)|}{|\SU_{n-1}(2)|}=2^{n-1}(2^n-1)=2\cdot4^{m-1}(4^m-1)=\frac{|\Sp_m(4).2|}{|\Sp_{m-2}(4)|}=\frac{|H|}{|H\cap K|}.
\]
This implies that $G=HK$, and so $Z=\overline{G}=\overline{H}\,\overline{K}=XY$.
\end{proof}

We construct the factor pairs $(X,Y)$ in Rows~7 and~8 of Table~\ref{TabUnitary} in the following two lemmas.

\begin{lemma}\label{LemUnitary06}
Let $G=\SU(V){:}\langle\phi\rangle=\SU_n(q){:}(2f)$ with $q\in\{2,4\}$ and $n=2m$ for some even $m$, let $H=S{:}\langle\rho\rangle$ with $S=\Sp_m(q^2)<T$ and $\rho\in\{\phi,\phi\gamma^{2/f}\}$, let $K=G_v$, let $Z=\overline{G}$, let $X=\overline{H}$, and let $Y=\overline{K}$. Then $H\cap K=\Sp_{m-2}(q^2)$, and $Z=XY$ with $Z=\PSU_n(q).(2f)$, $X=\Sp_m(q^2).(2f)$ and $Y\cong K=\SU_{n-1}(q).(2f)$.
\end{lemma}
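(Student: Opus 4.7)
The plan is to imitate the proof of Lemma~\ref{LemUnitary05}, using Lemma~\ref{LemUnitary04} as a black box to compute the intersection $H\cap K$ and then verifying the order identity $|H||K|=|G||H\cap K|$ to conclude $G=HK$, whence $Z=XY$. The identifications $Z=\PSU_n(q).(2f)$, $X=\Sp_m(q^2).(2f)$ and $Y\cong K=\SU_{n-1}(q).(2f)$ are immediate from the definitions and from the fact that $\overline{\phantom{\varphi}}\,$ has kernel contained in the scalars (noting $(m,q+1)=1$ for $m$ even and $q\in\{2,4\}$, since $q+1\in\{3,5\}$, so $S=\Sp_m(q^2)$ projects faithfully to $\overline{S}$).

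The central reduction is the chain $H=S{:}\langle\rho\rangle<T{:}\langle\rho\rangle$. Lemma~\ref{LemUnitary04} tells us exactly that $(T{:}\langle\rho\rangle)\cap K=\SL_{m-1}(q^2)$; the point here was that no element of the coset $T\rho$ can fix $v$, by the trace argument $\lambda+\lambda^q=1$ forcing $\lambda=\lambda^q$ — a contradiction. Consequently
\[
H\cap K \;=\; H\cap\bigl((T{:}\langle\rho\rangle)\cap K\bigr) \;=\; H\cap\SL_{m-1}(q^2) \;=\; S\cap\SL_{m-1}(q^2),
\]
where $\SL_{m-1}(q^2)$ sits inside $T=\SL_m(q^2)$ as the pointwise stabilizer of $\langle e_1\rangle$ in $U$ (equivalently, the stabilizer of $U_1$ with $e_1$ fixed). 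I would then invoke \cite[Lemma~4.6]{LWX(2)}, which says precisely that $\Sp_m(q^2)\cap\SL_{m-1}(q^2)=\Sp_{m-2}(q^2)$ in this embedding — exactly the reference used in the proof of Lemma~\ref{LemUnitary05}.

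Finally I would check the order identity. Since the outer factor $(2f)$ on both $G$ and $K$ cancels in $|G|/|K|$, and cancels against $\langle\rho\rangle$ in $|H|/|H\cap K|$, both sides reduce to
\[
\frac{|\SU_n(q)|}{|\SU_{n-1}(q)|}\;=\;q^{2m-1}(q^{2m}-1)\;=\;\frac{|\Sp_m(q^2)|}{|\Sp_{m-2}(q^2)|}\cdot(2f)\big/(2f),
\]
so $|H||K|=|G||H\cap K|$ and $G=HK$, hence $Z=\overline{G}=\overline{H}\,\overline{K}=XY$.

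I do not expect any real obstacle: the only subtle point is checking that the ambient embedding $\SL_{m-1}(q^2)\leqslant T=\SL_m(q^2)$ from Lemma~\ref{LemUnitary01} is consistent with the embedding required by \cite[Lemma~4.6]{LWX(2)} so that the intersection indeed gives $\Sp_{m-2}(q^2)$ rather than some conjugate that could intersect $S$ differently. Since the same reference and same chain were used in Lemma~\ref{LemUnitary05}, and since $H$ here is merely Lemma~\ref{LemUnitary05}'s $H$ extended by the field automorphism factor $\langle\rho\rangle$, this compatibility is inherited.
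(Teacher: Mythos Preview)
Your approach is exactly the paper's: embed $H$ in $T{:}\langle\rho\rangle$, invoke Lemma~\ref{LemUnitary04} to get $(T{:}\langle\rho\rangle)\cap K=\SL_{m-1}(q^2)$, then use \cite[Lemma~4.6]{LWX(2)} to obtain $H\cap K=S\cap\SL_{m-1}(q^2)=\Sp_{m-2}(q^2)$, and finish by comparing orders.

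One correction to your arithmetic, though: the factor $2f$ does \emph{not} cancel in $|H|/|H\cap K|$, because $H\cap K=\Sp_{m-2}(q^2)$ carries no outer part. You get
\[
\frac{|H|}{|H\cap K|}=2f\cdot\frac{|\Sp_m(q^2)|}{|\Sp_{m-2}(q^2)|}=2f\cdot q^{2m-2}(q^{2m}-1),
\qquad
\frac{|G|}{|K|}=q^{2m-1}(q^{2m}-1),
\]
and these match precisely because $q=2f$ when $q\in\{2,4\}$. This is the only place the hypothesis on $q$ is actually used, so your phrase ``the $(2f)$ cancels'' obscures the key point; the paper writes the identity $q^{n-1}(q^n-1)=2f\cdot q^{2m-2}(q^{2m}-1)$ explicitly for this reason.
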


\begin{proof}
It is clear that $Z=\PSU_n(q).(2f)$, $X=\Sp_m(q^2).(2f)$, and $Y\cong K=\SU_{n-1}(q).(2f)$. By Lemmas~\ref{LemUnitary04} and~\cite[Lemma~4.6]{LWX-Linear}, we have
\[
H\cap K=H\cap((T{:}\langle\rho\rangle)\cap K)=H\cap\SL_{m-1}(q^2)=\Sp_{m-2}(q^2).
\]
As $q\in\{2,4\}$, it follows that
\[
\frac{|G|}{|K|}=\frac{|\SU_n(q){:}(2f)|}{|\SU_{n-1}(q).(2f)|}=q^{n-1}(q^n-1)=2f\cdot q^{2m-2}(q^{2m}-1)=\frac{|\Sp_m(q^2).(2f)|}{|\Sp_{m-2}(q^2)|}=\frac{|H|}{|H\cap K|},
\]
which implies $G=HK$. Thus $Z=\overline{G}=\overline{H}\,\overline{K}=XY$.
\end{proof}

\begin{lemma}\label{LemUnitary07}
Let $G=\SU(V)=\SU_n(q)$ with $n=2m$, let $H=\Sp_{2m}(q)<G$, let $K=\N_1[G]^{(\infty)}$, let $Z=\overline{G}$, let $X=\overline{H}$, and let $Y=\overline{K}$. Then $H\cap K=\Sp_{2m-2}(q)<\N_2[H]$, and $Z=XY$ with $Z=\PSU_n(q)$, $X=\PSp_{2m}(q)$ and $Y\cong K=\SU_{n-1}(q)$.
\end{lemma}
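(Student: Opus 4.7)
The plan is to exploit the $\C_5$-subgroup structure of $H=\Sp_{2m}(q)$ inside $G=\SU_{2m}(q)$. The group $H$ is realised as the stabiliser in $G$ of an $\bbF_q$-structure $W_0\subset V$, namely an $\bbF_q$-subspace of $\bbF_q$-dimension $2m$ with $V=W_0\otimes_{\bbF_q}\bbF_{q^2}$, equipped with a symplectic form $\omega$ compatible with the Hermitian form $\beta$ in the sense that $\beta$ is obtained from the $\bbF_{q^2}$-bilinear extension of $\omega$ by a sesquilinear twist (the standard $\C_5$ construction). Every element of $H$ acts $\bbF_q$-linearly on $W_0$ preserving $\omega$, and extends $\bbF_{q^2}$-linearly to $V$. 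On the other side, $\N_1[G]$ is isomorphic to $\GU_{2m-1}(q)$ (acting on the orthogonal complement of a nondegenerate $1$-space in $V$), so $K=\N_1[G]^{(\infty)}\cong\SU_{2m-1}(q)$, and up to $G$-conjugacy we may take $K$ to be the pointwise stabiliser $G_u$ of some nondegenerate vector $u\in V$ with $\beta(u,u)=1$.

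Next I compute $H\cap K=H_u$. Fix an $\bbF_q$-basis $\{1,\theta\}$ of $\bbF_{q^2}$ so that $V=W_0\oplus\theta W_0$, and write any $u\in V$ uniquely as $u=u_0+\theta u_1$ with $u_0,u_1\in W_0$. Since $H$ acts $\bbF_q$-linearly and preserves this decomposition, we obtain $H_u=H_{u_0}\cap H_{u_1}$, the pointwise stabiliser in $\Sp(W_0,\omega)$ of both $u_0$ and $u_1$. A direct computation using the explicit expression of $\beta$ in terms of $\omega$ shows that the condition $\beta(u,u)=1$ forces $\omega(u_0,u_1)$ to equal a fixed nonzero element of $\bbF_q$; in particular $u_0$ and $u_1$ are $\bbF_q$-linearly independent and span a nondegenerate (hyperbolic) $2$-subspace $U_0\subseteq W_0$. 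Hence $H_u$ is the pointwise stabiliser in $\Sp(W_0,\omega)$ of the hyperbolic plane $U_0$, which equals $\Sp_{2m-2}(q)$ and is visibly contained in the setwise stabiliser $\N_2[H]$ of $U_0$. By Lemma~\ref{LemXia04} the choice of $G$-conjugate of $K$ does not affect the isomorphism type of $H\cap K$, so this suffices.

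The factorisation $G=HK$ then follows from the order identity
\[
\frac{|H|}{|H\cap K|}=\frac{|\Sp_{2m}(q)|}{|\Sp_{2m-2}(q)|}=q^{2m-1}(q^{2m}-1)=\frac{|\SU_{2m}(q)|}{|\SU_{2m-1}(q)|}=\frac{|G|}{|K|}
\]
combined with $HK\subseteq G$. Passing modulo scalars yields $Z=\overline{G}=\overline{H}\,\overline{K}=XY$ with $Z=\PSU_n(q)$, $X=\PSp_{2m}(q)$ and $Y\cong K=\SU_{n-1}(q)$. The main obstacle is pinning down the $\C_5$ embedding and the precise relation between $\beta$ and $\omega$ so that the decomposition $u=u_0+\theta u_1$ genuinely produces linearly independent components spanning a nondegenerate $\bbF_q$-plane in $W_0$; this is routine in odd characteristic (where the twist is by a trace-zero element of $\bbF_{q^2}$) but requires a slightly different recipe in characteristic~$2$, where the Hermitian form cannot be obtained from the symplectic one by the same scalar twist.
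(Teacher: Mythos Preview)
Your approach is correct and is a close variant of the paper's. Both arguments identify $H\cap K$ with the pointwise stabiliser in $\Sp(W_0)$ of a hyperbolic $\bbF_q$-plane and then finish with the same order count. The paper fixes the concrete vector $u=\mu e_1+\zeta f_1$ (with $\mu^{q-1}=-1$ and $\zeta\notin\bbF_q$), computes $W_0\cap u^\perp$ explicitly, and then uses a short determinant argument inside $\SU(\langle e_1,f_1\rangle_{\bbF_{q^2}})$ to force every element of $H\cap K$ to fix $e_1$ and $f_1$ individually; your decomposition $u=u_0+\theta u_1$ reaches $H_u=H_{u_0}\cap H_{u_1}$ in one stroke and bypasses that last step, at the price of being less explicit about which hyperbolic pair in $W_0$ actually arises.

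Your worry about characteristic~$2$ is unnecessary. There the condition $\mu^{q-1}=-1$ simply gives $\mu\in\bbF_q^\times$, so one may take $W_0=\langle e_1,f_1,\dots,e_m,f_m\rangle_{\bbF_q}$ with $\omega=\beta|_{W_0\times W_0}$ and no twist at all. The same expansion you use in odd characteristic then yields $\beta(u,u)=(\theta+\theta^q)\,\omega(u_0,u_1)$, and $\theta+\theta^q\neq0$ precisely because $\theta\notin\bbF_q$ (in characteristic~$2$ the trace-zero elements of $\bbF_{q^2}$ are exactly those of $\bbF_q$). Hence the argument runs uniformly in all characteristics, and no separate recipe is needed.
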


\begin{proof}
Let $\mu\in\bbF_{q^2}$ such that $\mu^{q-1}=-1$, and let $V_0=\langle\mu e_1,f_1,\dots,\mu e_m,f_m\rangle_{\bbF_q}$. 
According to~\cite[3.10.6]{Wilson2009}, there is a nondegenerate alternating form $\beta_0$ on $V_0$ such that $\mu e_1,f_1,\dots,\mu e_m,f_m$ is a standard basis of $V_0$ with respect to $\beta_0$ and $H=\Sp(V_0)$. Let $\zeta\in\bbF_{q^2}\setminus\bbF_q$, and let $u=\mu e_1+\zeta f_1$. 
Then $\beta(u,u)=\mu\zeta^q+\zeta\mu^q=\mu(\zeta^q-\zeta)\neq0$, and so we may assume without loss of generality that $K=G_u$.
Let $w=\mu e_1+\zeta^qf_1$. Then $\langle w\rangle_{\bbF_{q^2}}$ is the orthogonal complement of $\langle u\rangle_{\bbF_{q^2}}$ in the unitary space $\langle e_1,f_1\rangle_{\bbF_{q^2}}$, and
\[
u^\perp=\langle w,e_2,f_2,\dots,e_m,f_m\rangle_{\bbF_{q^2}}=\langle w,\mu e_2,f_2,\dots,\mu e_m,f_m\rangle_{\bbF_{q^2}}.
\]

Clearly, $V_0\cap u^\perp$ contains $\langle\mu e_2,f_2,\dots,\mu e_m,f_m\rangle_{\bbF_q}$. Suppose that they are not equal. 
Then there exists nonzero $\xi\in\bbF_{q^2}$ such that $\xi w\in\langle\mu e_1,f_1\rangle_{\bbF_q}$, that is, $\xi(\mu e_1+\zeta^qf_1)=a\mu e_1+bf_1$ for some $a,b\in\bbF_q$. However, this implies that $\zeta^q=b/a\in\bbF_q$, contradicting the condition $\zeta\in\bbF_{q^2}\setminus\bbF_q$.
Thus $V_0\cap u^\perp=\langle\mu e_2,f_2,\dots,\mu e_m,f_m\rangle_{\bbF_q}$. Then since $H\cap K$ stabilizes $V_0\cap u^\perp$, it stabilizes 
\[
V_1:=(\langle\mu e_2,f_2,\dots,\mu e_m,f_m\rangle_{\bbF_q})^\perp=\langle e_1,f_1\rangle_{\bbF_{q^2}}=\langle\mu e_1,f_1\rangle_{\bbF_{q^2}}.
\]
Let $V_2:=V_0\cap(V_1)^\perp=\langle\mu e_1,f_1\rangle_{\bbF_q}$. Then $H\cap K$ also stabilizes $V_2$.
For each $g\in H\cap K$, since $g|_{V_2}\in\Sp(V_2)$, we have $\det(g|_{V_2})=1$ and hence $\det(g|_{V_1})=1$.
This implies that $g|_{V_1}\in\SU(V_1)$. As $g$ fixes the nonsingular vector $u$ in the unitary space $V_1$, it then follows that $g|_{V_1}=1$.
In particular, $g$ fixes $e_1$ and $f_1$, and so $H\cap K\leqslant H_{e_1,f_1}$. Conversely, $H_{e_1,f_1}$ is obviously contained in $H_u=H\cap K$. Hence 
\[
H\cap K=H_{e_1,f_1}\cong\Sp(\langle\mu e_2,f_2,\dots,\mu e_m,f_m\rangle_{\bbF_q})=\Sp_{2m-2}(q).
\]

Now we have
\[
\frac{|G|}{|K|}=\frac{|\SU_{2m}(q)|}{|\SU_{2m-1}(q)|}=q^{2m-1}(q^{2m}-1)=\frac{|\Sp_{2m}(q)|}{|\Sp_{2m-2}(q)|}=\frac{|H|}{|H\cap K|}.
\]
Thus $G=HK$, and so $Z=\overline{G}=\overline{H}\,\overline{K}=XY$.
\end{proof}

\begin{lemma}\label{LemUnitary11}
Let $G=\SU(V)=\SU_6(q)$ with $n=6$ and $q$ even, let $H=\G_2(q)<\Sp_6(q)<G$, let $K=G_v$, let $Z=\overline{G}$, let $X=\overline{H}$, and let $Y=\overline{K}$. 
Then $H\cap K=\SL_2(q)$, and $Z=XY$ with $Z=\PSU_6(q)$, $X=\G_2(q)$ and $Y\cong K=\SU_5(q)$.
\end{lemma}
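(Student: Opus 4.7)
The plan is to parallel Lemma~\ref{LemUnitary07} with $m=3$ and then intersect further with the $\G_2(q)$ subgroup inside $\Sp_6(q)$. Since $H=\G_2(q)<\Sp_6(q)<G$, and Lemma~\ref{LemXia04} permits replacing $K$ by a $G$-conjugate without changing the isomorphism type of $H\cap K$, I would choose coordinates so that $v$ coincides with the nonsingular vector $u$ used in the proof of Lemma~\ref{LemUnitary07}. Applying that lemma with $m=3$ gives $\Sp_6(q)\cap K=(\Sp_6(q))_{e_1,f_1}\cong\Sp_4(q)$, realized as the pointwise stabilizer inside $\Sp_6(q)$ of the non-degenerate $\bbF_q$-plane $V_2=\langle\mu e_1,f_1\rangle_{\bbF_q}$.

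Since $H<\Sp_6(q)$, we then have
\[
H\cap K=H\cap(\Sp_6(q)\cap K)=\G_2(q)\cap\Sp_4(q),
\]
which is precisely the pointwise stabilizer in $\G_2(q)$ of the plane $V_2$ in its natural $6$-dimensional symplectic module. By the subgroup structure of $\G_2(q)$ for $q$ even (in particular, the short-root $\SL_2$-factor of the $\SL_2\circ\SL_2$ subgroup of $\G_2(q)$, analogous in spirit to the stabilizer computation \cite[Lemma~4.5]{LWX(2)} used in Row~3 of Table~\ref{TabUnitary}), this pointwise stabilizer is isomorphic to $\SL_2(q)$.

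Finally, the order identity
\[
\frac{|H|}{|H\cap K|}=\frac{q^6(q^6-1)(q^2-1)}{q(q^2-1)}=q^5(q^6-1)=\frac{|\SU_6(q)|}{|\SU_5(q)|}=\frac{|G|}{|K|}
\]
yields $|HK|=|G|$, so $G=HK$, and passing to the quotient modulo scalars delivers $Z=\overline{G}=\overline{H}\,\overline{K}=XY$ with $Z=\PSU_6(q)$, $X=\G_2(q)$, and $Y\cong K=\SU_5(q)$. The main obstacle is the second step: justifying $\G_2(q)\cap\Sp_4(q)=\SL_2(q)$ requires the detailed structure of stabilizers in the action of $\G_2(q)$ on its $6$-dimensional symplectic module, and this is where either a citation of standard Chevalley subgroup data or a direct calculation mirroring \cite[Lemma~4.5]{LWX(2)} would carry the real content of the proof.
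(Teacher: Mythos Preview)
Your proposal is essentially the same as the paper's proof: use Lemma~\ref{LemUnitary07} to obtain $\Sp_6(q)\cap K=\Sp_4(q)$, then intersect with $H=\G_2(q)$, and finish with the order count. The citation you were searching for in your final paragraph is \cite[Lemma~4.11]{LWX(2)} (not Lemma~4.5), which gives precisely $\G_2(q)\cap\Sp_4(q)=\SL_2(q)$ for the $\Sp_4(q)<\N_2[\Sp_6(q)]$ arising from Lemma~\ref{LemUnitary07}; with that reference in hand your argument is complete and matches the paper verbatim.
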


\begin{proof}
It is clear that $Z=\PSU_6(q)$, $X=\G_2(q)$, and $Y\cong K=\SU_5(q)$. By Lemmas~\ref{LemUnitary07} and~\cite[Lemma~4.11]{LWX-Linear} we have
\[
H\cap K=H\cap(\Sp_6(q)\cap K)=H\cap\Sp_4(q)=\SL_2(q).
\]
Hence
\[
\frac{|G|}{|K|}=\frac{|\SU_6(q)|}{|\SU_5(q)|}=q^5(q^6-1)=\frac{|\G_2(q)|}{|\SL_2(q)|}=\frac{|H|}{|H\cap K|},
\]
and so $G=HK$, which implies that $Z=\overline{G}=\overline{H}\,\overline{K}=XY$.
\end{proof}

\begin{remark}
If we let $H=\G_2(q)'$ in Lemma~\ref{LemUnitary11} then the conclusion $Z=XY$ would not hold for $q=2$. 
\end{remark}

\section{Sporadic cases of $(X,Y)$ in Table~\ref{TabUnitary}}\label{SecUnitary02}

The factor pairs $(X,Y)$ in Rows~9--14 of Table~\ref{TabUnitary} are constructed in Lemmas~\ref{LemUnitary12}--\ref{LemUnitary17} below, which are verified by computation in \magma~\cite{BCP1997}. The maximal subgroups of almost simple groups with socle $\PSU_4(3)$ can be found in ~\cite{CCNPW1985}.

\begin{lemma}\label{LemUnitary12}
Let $L=\PSU_4(3)$, let $Z=L.2$ be an almost simple group with socle $L$ such that $Z$ has a maximal subgroup $A=3^4{:}(\A_6\times2)$, let $X=3^4{:}(\A_5\times2)$ be a subgroup of $A$ (there are two conjugacy classes of such subgroups $X$), and let $Y=\PSL_3(4).2$ be a maximal subgroup of $G$. Then $Z=XY$ with $X\cap Y=\A_5$.
\end{lemma}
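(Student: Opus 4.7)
The plan is to reduce the claim to the numerical identity $|Z|=|X||Y|/|X\cap Y|$ and then pin down $X\cap Y$. First I would record orders: since $|\PSU_4(3)|=2^7\cdot3^6\cdot5\cdot7$, we have $|Z|=2^8\cdot3^6\cdot5\cdot7$, while $|X|=2^4\cdot3^5\cdot5$ and $|Y|=2^7\cdot3^2\cdot5\cdot7$. A direct multiplication gives $|X||Y|/|Z|=60$. Consequently it suffices to prove $|X\cap Y|=60$ together with $X\cap Y\cong\A_5$, because then the standard count $|XY|=|X||Y|/|X\cap Y|=|Z|$ combined with $XY\subseteq Z$ forces $Z=XY$.

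To handle the intersection I would work inside a concrete realization of $Z=\PSU_4(3).2$ in \magma, for instance via its permutation action on the $112$ isotropic $1$-spaces of the natural unitary module, or on cosets of a maximal subgroup of small index. Using the Atlas data in~\cite{CCNPW1985}, I would locate a representative of the $Z$-class of $A=3^4{:}(\A_6\times2)$; here $3^4$ is the unipotent radical of a $\Pa_2$-type parabolic and $\A_6\times2$ is its Levi complement in $Z$. Inside $A$, the two $Z$-classes of subgroups $X=3^4{:}(\A_5\times2)$ correspond to the two $\A_6$-classes of $\A_5$ (fused in $\Sym(6)$ but not in $\A_6$), and I would take one representative from each class. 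For $Y$ I would pick a maximal subgroup of $Z$ isomorphic to $\PSL_3(4).2$ from the corresponding Atlas class.

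With these concrete subgroups fixed, the core of the proof is a direct computation of $X\cap Y$ in \magma, confirming order $60$ and structure $\A_5$. By Lemma~\ref{LemXia04}, one representative pair from each relevant $Z$-conjugacy class suffices, since conjugation preserves the intersection up to isomorphism. The main obstacle is administrative rather than mathematical: making sure the two candidate $X$'s genuinely represent the two distinct conjugacy classes, and that the chosen $Y$ lies in the unique $Z$-class of $\PSL_3(4).2$ that produces a nontrivial factorization (other fusion patterns could in principle give larger or smaller intersections). Once $|X\cap Y|=60$ is verified in each case, the factorization $Z=XY$ follows immediately from the order equation, completing the lemma.
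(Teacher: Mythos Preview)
Your approach is essentially the paper's own: the authors state that this lemma is ``verified by computation in \magma'' with no further argument, and your plan to realize $Z$, locate $X$ and $Y$, compute $X\cap Y$, and conclude via the order formula is exactly how such a verification is carried out. One small slip to fix: $|X|=|3^4{:}(\A_5\times 2)|=3^4\cdot 60\cdot 2=2^3\cdot 3^5\cdot 5$, not $2^4\cdot 3^5\cdot 5$ as you wrote (your final value $|X||Y|/|Z|=60$ is nonetheless correct, so this is only a typo in the intermediate step).
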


\begin{lemma}\label{LemUnitary13}
Let $Z=\PSU_4(3)$, let $X=\PSp_4(3)<Z$ (there are two conjugacy classes of such subgroups $X$), and let $Y=\PSL_3(4)<Z$ (there are two conjugacy classes of such subgroups $Y$). Then $Z=XY$ with $X\cap Y=2^4{:}\D_{10}$.
\end{lemma}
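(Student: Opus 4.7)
The approach is the one announced in the opening paragraph of this section: a direct verification in \magma, supplemented by order arithmetic and the maximal-subgroup data recorded in the Atlas \cite{CCNPW1985}. The plan is to realize $Z=\PSU_4(3)$ concretely, locate subgroups of the two declared isomorphism types, form their intersection, and check that its order is consistent with the factorization equality $|Z|=|X||Y|/|X\cap Y|$.

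First I would read off from the Atlas that $\PSU_4(3)$ has exactly two conjugacy classes of maximal subgroups isomorphic to $\PSp_4(3)\cong\U_4(2)$ and exactly two conjugacy classes isomorphic to $\PSL_3(4)$, which justifies the parenthetical claims. The two classes in each case are interchanged by the outer graph automorphism, so by Lemma~\ref{LemXia03} it suffices to check the factorization property for one pair of representatives $(X,Y)$: every other choice is related by an element of $\Aut(L)$ and hence also yields a factor pair with isomorphic intersection.

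Second, a quick order check is decisive. From $|\PSU_4(3)|=3\,265\,920$, $|\PSp_4(3)|=25\,920$, $|\PSL_3(4)|=20\,160$, and $|2^4{:}\D_{10}|=160$, we get
\[
\frac{|X||Y|}{|X\cap Y|}=\frac{25\,920\cdot 20\,160}{160}=3\,265\,920=|Z|,
\]
so $Z=XY$ holds if and only if $|X\cap Y|=160$. It therefore suffices to verify that, for one chosen pair of representatives, the intersection $X\cap Y$ has order $160$ and is isomorphic to $2^4{:}\D_{10}$.

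Third, in \magma\ I would build $Z$ from its natural permutation or matrix representation, extract representative subgroups $X$ and $Y$ from the maximal-subgroup lattice, form $X\cap Y$, check that $|X\cap Y|=160$, and then identify its structure (for instance by locating a normal elementary abelian subgroup of order $16$ with quotient $\D_{10}$ via the Sylow and normal-subgroup routines). The only real obstacle is the structural identification: showing $|X\cap Y|=160$ is forced by the order arithmetic once $Z=XY$ is confirmed, but distinguishing $2^4{:}\D_{10}$ from other groups of order $160$ demands the explicit \magma\ check. Independence of the choice of conjugacy-class representatives in $Z$ follows from Lemma~\ref{LemXia04}, so the single machine computation settles every case.
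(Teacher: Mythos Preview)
Your approach matches the paper's: this lemma is stated there without a written argument and simply declared to be verified by computation in \magma, which is exactly what you propose, dressed up with order arithmetic and Atlas cross-references. One point to tighten: your reduction to a single pair $(X,Y)$ via Lemma~\ref{LemXia03} moves pairs only by \emph{simultaneous} conjugation in $\Aut(L)$, so covering all four combinations $(X_i,Y_j)$ requires that the stabilizers in $\Out(L)\cong\D_8$ of the two $X$-classes and of the two $Y$-classes be \emph{different} index-$2$ subgroups, not merely that some outer automorphism swaps each pair of classes (and note that $\PSU_4(3)$ has no ``graph automorphism'' in the usual sense---the outer automorphisms here are diagonal and field). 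The simplest fix, entirely in the spirit of a \magma\ proof, is to loop over all four class-pairs.
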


\begin{lemma}\label{LemUnitary14}
Let $L=\PSU_4(3)$, let $Z$ be an almost simple group with socle $L$ such that $Z/L=4$ or $2^2$ and $Z$ has no maximal subgroup of the form $\PSU_4(2).2^2$, let $X=\PSL_2(7).(G/L)<Z$ (there is a unique conjugacy class of such subgroups $X$), and let $Y=\Pa_2[Z]$. Then $Z=XY$ with $X\cap Y=\Sy_3$.
\end{lemma}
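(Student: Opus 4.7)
The plan is to verify the claim by direct computation in Magma, as stated in the preamble to Lemmas~\ref{LemUnitary12}--\ref{LemUnitary17}. The essential content of the lemma is the identification of the correct extension $Z$ and the verification of a numerical/structural factorization, both of which are routine once the set-up is fixed.

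First I would construct $Z$. Since $|\Out(\PSU_4(3))|=8$, there are several almost simple extensions of $L=\PSU_4(3)$ with $|Z{:}L|\in\{4,2^2\}$, and these can be distinguished inside $\Aut(L)$ by their maximal subgroup lattices using the \textsc{Atlas} data in~\cite{CCNPW1985}. The hypothesis that $Z$ contains no maximal subgroup of shape $\PSU_4(2).2^2$ singles out the required isomorphism type of $Z$; I would enumerate the candidate extensions in Magma and pick the one satisfying this condition.

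Next I would locate $X$ and $Y$. The group $L$ contains $\PSL_3(4)$ as a maximal subgroup, inside which $\PSL_2(7)$ sits as the point stabilizer of the natural degree-$21$ action; taking the $Z$-normalizer of such a copy yields $X\cong\PSL_2(7).(Z/L)$, and I would confirm in Magma that there is a unique $Z$-conjugacy class of such subgroups. For $Y=\Pa_2[Z]$, I would take the setwise stabilizer in $Z$ of a $2$-dimensional totally isotropic subspace of the natural unitary module, giving $Y=(3^4{:}\A_6).(Z/L)$. Then a single Magma call computes $X\cap Y$; checking that $|X\cap Y|=6$ and that the intersection is isomorphic to $\Sy_3$ suffices, since $|X||Y|/|X\cap Y|=|Z|$ then forces $Z=XY$.

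The main obstacle is organizational rather than mathematical: $\Out(L)$ is relatively large and several extensions $Z$ of the same order exist, so care is required to select the $Z$ picked out by the non-containment hypothesis on $\PSU_4(2).2^2$. Once $Z$ is fixed correctly, the uniqueness of the $X$-class and the identification $X\cap Y\cong\Sy_3$ are straightforward Magma verifications.
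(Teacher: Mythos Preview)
Your plan matches the paper's approach exactly: the paper proves this lemma (and the surrounding ones) purely by \magma\ computation, so a direct verification as you describe is what is expected.

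Two small inaccuracies in your write-up are worth fixing before you run the computation. First, the hypothesis does not single out a \emph{unique} $Z$: since $\Out(\PSU_4(3))\cong\D_8$ has three subgroups of order~$4$ (one cyclic, two Klein four), the non-containment condition on $\PSU_4(2).2^2$ leaves more than one admissible $Z$ (indeed Row~11 of Table~\ref{TabUnitary} records both $\calO=4$ and $\calO=2^2$), and you must verify the factorization for each. Second, $\PSL_2(7)$ is not the point stabilizer in the degree-$21$ action of $\PSL_3(4)$, since $|\PSL_3(4)|/21=960\neq168$; rather $\PSL_2(7)$ has index~$120$ in $\PSL_3(4)$ (it is a $\calC_5$-subgroup), or alternatively you can locate it directly as an $\calS$-type subgroup of $\PSU_4(3)$ via \cite[Table~8.10]{BHR2013}. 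Neither issue affects the viability of the \magma\ verification, but both would trip up the set-up as written.
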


\begin{lemma}\label{LemUnitary15}
Let $L=\PSU_4(3)$, let $Z=L.2$ be an almost simple group with socle $L$ such that $Z$ has a maximal subgroup of the form $\PSL_3(4).2$, and let $Y=\Pa_2[Z]$.
Then $Y=3^4{:}(\A_6\times2)$ or $3^4{:}\M_{10}$. 
If $Y=3^4{:}(\A_6\times2)$, then each maximal subgroup $X$ of $Z$ of the form $\PSL_3(4).2$ satisfies $Z=XY$. 
If $Y=3^4{:}\M_{10}$, then there is precisely one conjugacy class of maximal subgroups $X$ of $Z$ of the form $\PSL_3(4).2$ such that $Z=XY$. 
For each such pair $(X,Y)$ we have $X\cap Y=\A_6$.
\end{lemma}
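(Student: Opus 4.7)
The proof is a direct computation in \magma{}, in the same spirit as Lemmas~\ref{LemUnitary12}--\ref{LemUnitary14}, so this proposal outlines the verification strategy rather than carrying out the arithmetic. The starting point is a faithful permutation representation of $\Aut(L) = L.2^2$ (for instance the degree-$112$ representation on singular $1$-spaces of the natural $\PSU_4(3)$-module, which is available in \magma's simple-group database). From this one enumerates the three subgroups $Z$ of index $2$ containing $L$ via \texttt{NormalSubgroups}, and inspects \texttt{MaximalSubgroups} to retain only those $Z$ that contain a maximal $\PSL_3(4).2$. By the ATLAS this yields exactly two candidate $Z$, corresponding to the two $L.2$-extensions in which the outer involution fuses the two $L$-classes of $\PSL_3(4)$.

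For each surviving $Z$, I would compute $Y = \Pa_2[Z]$ as the normalizer in $Z$ of a chosen $\Pa_2[L] \cong 3^4{:}\A_6$, and then determine the isomorphism type of the Levi quotient $Y/\bfO_3(Y)$ using \texttt{IdentifyGroup}. A priori this quotient is one of the four groups of order $720$ having $\A_6$ as an index-$2$ subgroup, namely $\A_6 \times 2$, $\Sy_6$, $\M_{10}$ and $\PGL_2(9)$; the computation confirms the dichotomy in the statement, that only $\A_6 \times 2$ and $\M_{10}$ occur here. Next I would enumerate the $Z$-conjugacy classes of maximal subgroups $X$ of type $\PSL_3(4).2$ (there are two such classes in each of the two $Z$, arising from the two $L$-classes of $\PSL_3(4)$) and, for each representative, test the factorization identity $|X||Y| = |Z||X \cap Y|$. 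In each positive case the intersection has order $360$, and a final call to \texttt{IdentifyGroup} (or a check on composition factors) confirms $X \cap Y \cong \A_6$.

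The main point that requires care, rather than computational difficulty, is keeping the four index-$2$ extensions of $\A_6$ listed above unambiguously distinguished, since all have order $720$ and the conclusion of the lemma depends on which appears as $Y/\bfO_3(Y)$ in each $Z$. This is handled by using an invariant such as the multiset of orders of elements in the non-trivial coset, or by a small-group identification, which is mechanical in \magma{} but must be tracked consistently so that the asymmetric conclusion is correctly matched: both $X$-classes yield $Z = XY$ when $Y \cong 3^4{:}(\A_6 \times 2)$, while exactly one class does so when $Y \cong 3^4{:}\M_{10}$.
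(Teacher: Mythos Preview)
Your approach---direct verification in \magma{}---is exactly what the paper does; indeed, the paper offers no written argument for this lemma beyond the blanket statement that Lemmas~\ref{LemUnitary12}--\ref{LemUnitary17} ``are verified by computation in \magma''. So at the level of method there is nothing to compare.

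That said, your outline contains two factual slips that would cause the computation, as literally described, to fail. First, $\Out(\PSU_4(3))\cong\D_8$, not $2^2$; hence $\Aut(L)=L.\D_8$, and its index-$2$ subgroups have the form $L.4$ or $L.2^2$ rather than $L.2$. To list the groups $Z=L.2$ you must instead pick out the subgroups of $\Aut(L)$ of order $2|L|$ containing $L$ (equivalently, the involutions of $\D_8$, of which there are three up to $\D_8$-conjugacy, giving the ATLAS groups $L.2_1$, $L.2_2$, $L.2_3$). Second, a maximal subgroup of the form $\PSL_3(4).2$ exists in $Z=L.2$ precisely when the outer involution \emph{normalises} an $L$-class of $\PSL_3(4)$, not when it fuses the two classes: if the classes were fused then $N_Z(\PSL_3(4))=\PSL_3(4)$ and no $\PSL_3(4).2$ would arise. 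With these corrections your later count (two $Z$-classes of $\PSL_3(4).2$ in each admissible $Z$) is consistent, and the remaining steps---identifying $Y/\bfO_3(Y)$ among the four $\A_6.2$'s, testing $|X||Y|=|Z||X\cap Y|$ for each class of $X$, and recognising $X\cap Y\cong\A_6$---are sound and match the paper's (implicit) procedure.
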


\begin{lemma}\label{LemUnitary16}
Let $L=\PSU_4(5)$, let $Z=L.2$ be an almost simple group with socle $L$ such that $Z$ has no maximal subgroup of the form $\A_7.2$ (refer to~\cite[Table~8.11]{BHR2013}), let $X=\Pa_2[Z]$, and let $Y=(3.\A_7).2<Z$ (there is a unique conjugacy class of such subgroups $Y$). Then $Z=XY$ with $X\cap Y=\AGL_1(5)$.
\end{lemma}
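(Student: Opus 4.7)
The plan is to establish the factorization computationally in \magma\ (as declared at the start of this section) after first confirming that the orders match and interpreting the hypothesis on $Z$ in terms of~\cite[Table~8.11]{BHR2013}.

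First I would identify the group $Z$ and the subgroup $Y$. The outer automorphism group $\Out(\PSU_4(5))$ has order $4$, generated by the diagonal automorphism of order $2$ (coming from $\gcd(4,5+1)=2$) and the graph-field automorphism of order $2$, so there are three pairwise non-isomorphic involutory extensions of the form $L.2$. A consultation of~\cite[Table~8.11]{BHR2013} shows that a maximal subgroup of type $\A_7.2$ exists in precisely one of these three extensions; the hypothesis therefore restricts $Z$ to one of the other two, and the same table yields the existence and uniqueness up to conjugacy of a subgroup $Y\cong(3.\A_7).2$ in $Z$ (whose presence in such a $Z$ can be traced back to a class of $3.\A_7$-subgroups in $L$).

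Second I would perform the order reconciliation. From the standard count of maximal totally singular $2$-subspaces in $\SU_4(q)$, namely $(q+1)(q^3+1)$, we have $[L:\Pa_2[L]]=6\cdot 126=756$, and hence $|X|=|Z|/756$. Combining this with $|Y|=15120$ and the claimed $|X\cap Y|=|\AGL_1(5)|=20$, we compute
\[
\frac{|X|\,|Y|}{|X\cap Y|}=\frac{|Z|}{756}\cdot\frac{15120}{20}=|Z|,
\]
which reduces the proof of $Z=XY$ to the identification of $X\cap Y$ as $\AGL_1(5)=5{:}4$.

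The actual verification would then be carried out in \magma: construct $L$ in its natural $4$-dimensional unitary representation, adjoin an appropriate outer involution to form $Z$, take $X$ to be the stabilizer of a totally singular $2$-subspace, and locate a representative of the unique conjugacy class of $(3.\A_7).2$-subgroups (for instance, as the normalizer of a $3.\A_7$-subgroup, which itself can be built from the triple cover of $\A_7$ together with a known embedding into $L$). A direct computation of $X\cap Y$ then gives both its order and its isomorphism type, completing the argument in the style of Lemmas~\ref{LemUnitary12}--\ref{LemUnitary15}. The main obstacle is the careful bookkeeping of outer automorphisms: one must pin down which of the three $L.2$ extensions is ruled out by the non-existence-of-$\A_7.2$ hypothesis, and confirm, using~\cite[Table~8.11]{BHR2013}, that $Y=(3.\A_7).2$ does embed in either of the two permitted extensions with a single conjugacy class; once $Z$ is correctly identified, the remaining \magma\ calculations are routine.
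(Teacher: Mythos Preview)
Your proposal is correct and matches the paper's approach: the paper states explicitly that Lemmas~\ref{LemUnitary12}--\ref{LemUnitary17} ``are verified by computation in \magma'' and gives no further argument, so your outline of that computation (identifying the admissible $Z$ via~\cite[Table~8.11]{BHR2013}, checking the order equation $|X||Y|=|Z|\cdot|\AGL_1(5)|$, and then confirming $X\cap Y\cong\AGL_1(5)$ in \magma) is exactly what is intended. Your added arithmetic and the remarks on the three $L.2$ extensions are helpful scaffolding but do not constitute a different method.
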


\begin{lemma}\label{LemUnitary17}
Let $Z=\PSU_6(2)$, let $X$ be a subgroup of $Z$ isomorphic to $\PSU_4(3)$ or $\M_{22}$ (there are precisely three conjugacy classes of such subgroups $X$ in each case), and let $Y=\N_1[Z]^{(\infty)}=\SU_5(2)$. Then $Z=XY$ with 
\[
X\cap Y=
\begin{cases}
3^4{:}\A_5&\textup{if }X=\PSU_4(3)\\
\PSL_2(11)&\textup{if }X=\M_{22}.
\end{cases}
\]
\end{lemma}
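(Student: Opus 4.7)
The plan is to verify this lemma by direct computation in \magma, in keeping with the approach announced at the start of Section~\ref{SecUnitary02}. First, construct $Z=\PSU_6(2)$ (for instance as a permutation group on the 672 isotropic points, or via its natural Hermitian representation over $\bbF_4$). Next construct $Y=\SU_5(2)$ as the derived subgroup of the stabilizer in $Z$ of a nonsingular $1$-space: by the \textsc{Atlas}~\cite{CCNPW1985}, $\N_1[Z]\cong\SU_5(2)\times3$ is a single conjugacy class of maximal subgroups of $Z$, and $\N_1[Z]^{(\infty)}=\SU_5(2)$ is the unique copy of $\SU_5(2)$ inside it.

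For each of the two isomorphism types in the statement, I would iterate over the three conjugacy classes of subgroups of $Z$ of that type (these are listed in the \textsc{Atlas} and are fused by $\Out(L)=\Sy_3$), select a representative $X$ using \magma's maximal subgroup library, and compute the isomorphism type of $X\cap Y$ via \texttt{Meet} or a suitable subgroup-structure call. Lemma~\ref{LemXia04} is what justifies working with a single representative per conjugacy class of $X$ up to $Z$-conjugation. The arithmetic confirms the claim: one checks that
\[
|\PSU_4(3)|\cdot|\SU_5(2)|=|\PSU_6(2)|\cdot|3^4{:}\A_5|
\quad\text{and}\quad
|\M_{22}|\cdot|\SU_5(2)|=|\PSU_6(2)|\cdot|\PSL_2(11)|,
\]
so once the isomorphism types of $X\cap Y$ are established, the identity $|X||Y|=|Z||X\cap Y|$ immediately gives $Z=XY$.

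The main obstacle, were one to attempt a structural proof, would be identifying $X\cap Y$ abstractly: for $X=\PSU_4(3)$ one would need to understand how the natural $4$-dimensional unitary subspace (or the exceptional embedding via triality) meets the $v^\perp$ decomposition, which delivers the parabolic $3^4{:}\A_5$ as a point-stabilizer inside $X$; for $X=\M_{22}$ one would exploit the $S(3,6,22)$ Steiner system and locate $\PSL_2(11)$ as a point stabilizer in the doubly transitive action of $\M_{22}$ on $22$ points realised inside $\PSU_6(2)$. Because there are three fused classes of each isomorphism type and because the relevant subgroup structures are all well documented, \magma\ verification is the most efficient and least error-prone route, and it is the route adopted here.
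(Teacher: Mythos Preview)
Your proposal is correct and follows essentially the same route as the paper: the sentence preceding Lemma~\ref{LemUnitary12} states that Lemmas~\ref{LemUnitary12}--\ref{LemUnitary17} ``are verified by computation in \magma'', and your plan spells out exactly such a verification (construct $Z$, $Y=\N_1[Z]^{(\infty)}$, a representative $X$ from each class, compute $X\cap Y$, and conclude $Z=XY$ from $|X||Y|=|Z||X\cap Y|$). Your appeal to Lemma~\ref{LemXia04} to reduce to one representative per $Z$-conjugacy class is appropriate, and together with the fusion of the three classes under $\Out(L)$ and the $\Aut(L)$-invariance of the $\N_1$-class (via Lemma~\ref{LemXia03}), a single computation in fact suffices.
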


The next two lemmas are proved in~\cite[5.2.12]{LPS1990} and~\cite[4.4.2]{LPS1990}, respectively.

\begin{lemma}\label{LemUnitary18}
Let $Z=\PSU_9(2)$, let $X=\J_3$ be a maximal subgroup of $Z$ (there are precisely three conjugacy classes of such subgroups $X$, see~\cite[Table~8.57]{BHR2013}), and let $Y=\Pa_1[Z]=2^{1+14}{:}\SU_7(2)$. Then $Z=XY$ with $X\cap Y=2^{2+4}.(3\times\Sy_3)$.
\end{lemma}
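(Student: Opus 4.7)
The plan is to verify $Z=XY$ via the order identity $|X||Y|=|Z||X\cap Y|$, followed by an identification of $X\cap Y$. First I would compute the relevant orders: $|Z|=|\PSU_9(2)|=2^{36}\cdot 3^{11}\cdot 5^2\cdot 7\cdot 11\cdot 17\cdot 19\cdot 43$, $|X|=|\J_3|=2^7\cdot 3^5\cdot 5\cdot 17\cdot 19$, and $|Y|=2^{15}|\SU_7(2)|=2^{36}\cdot 3^8\cdot 5\cdot 7\cdot 11\cdot 43$. This yields the index $[Z:Y]=3^3\cdot 5\cdot 17\cdot 19=43605$, which agrees with the standard count $(q^9+1)(q^8-1)/(q^2-1)$ of singular $1$-spaces in the $9$-dimensional natural unitary module at $q=2$, and forces the target intersection size $|X\cap Y|=1152=2^7\cdot 3^2$.

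Next, since $Y$ is the stabilizer in $Z$ of a singular $1$-space $\langle u\rangle$, the intersection $X\cap Y$ is precisely $X_{\langle u\rangle}$. I would invoke the embedding $X=\J_3<\PSU_9(2)$ arising from the $9$-dimensional irreducible unitary representation of $\J_3$ over $\bbF_4$ (cf.\ \cite[Table~8.57]{BHR2013}; the three conjugacy classes of $X$ correspond to Galois/outer twists of this representation) and analyse the orbits of $X$ on the singular $1$-spaces of the natural module. The factorization $Z=XY$ is equivalent to $X$ being transitive on these $43605$ singular points, which in turn reduces to showing that $X_{\langle u\rangle}$ has order exactly $1152$ for some (hence every) such $u$.

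The expected obstacle is the explicit identification of $X\cap Y$ as a $2$-local subgroup of $\J_3$ of shape $2^{2+4}.(3\times\Sy_3)$. Concretely, one must locate a subgroup of this structure inside $\J_3$ (consulting the list of local subgroups in \cite{CCNPW1985}) and verify that it fixes a singular vector in the $9$-dimensional natural module, a representation-theoretic check that can be carried out in \magma\ or directly from the Brauer character of $\J_3$ in characteristic $2$. This verification is the content of \cite[5.2.12]{LPS1990}, which the present proof would cite. Once the order equation is established, the conclusion $Z=XY$ follows immediately from $|XY|=|X||Y|/|X\cap Y|=|Z|$.
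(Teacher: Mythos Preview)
Your proposal is correct and aligns with the paper's treatment: the paper does not give an independent argument but simply records that the lemma is proved in~\cite[5.2.12]{LPS1990}, exactly the reference you plan to invoke for the identification of $X\cap Y$. Your additional order computations and the reformulation in terms of transitivity of $\J_3$ on the $43605$ singular $1$-spaces are a helpful gloss on that citation, but the substance is the same.
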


\begin{lemma}\label{LemUnitary19}
Let $Z=\PSU_{12}(2)$, let $X=\Suz$ be a maximal subgroup of $Z$ (there are precisely three conjugacy classes of such subgroups $X$, see~\cite[Table~8.79]{BHR2013}), and let $Y=\N_1[Z]=\SU_{11}(2)$. Then $Z=XY$ with $X\cap Y=3^5.\PSL_2(11)$.
\end{lemma}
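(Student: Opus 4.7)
The plan is to invoke the maximal factorization
$$\PSU_{12}(2) = \Suz \cdot \N_1[\PSU_{12}(2)]$$
recorded in~\cite[4.4.2]{LPS1990}. Setting $Y = \N_1[Z]^{(\infty)} = \SU_{11}(2)$, an application of Lemma~\ref{LemXia01} (after checking that the product $XY$ absorbs the scalar quotient separating $\N_1[Z]$ from $\SU_{11}(2)$) passes the cited factorization down to the stated $Z = XY$.

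Given $Z = XY$, the order of the intersection is forced by
$$|X \cap Y| = \frac{|X|\,|Y|}{|Z|} = \frac{|\Suz|\cdot|\SU_{11}(2)|}{|\PSU_{12}(2)|} = 3^5\cdot|\PSL_2(11)|,$$
so in particular $3^5 \cdot 11$ divides $|X \cap Y|$. I would then identify $X \cap Y$ as the stabilizer in $X = \Suz$ of the nonsingular vector $v$ that defines $Y$. Scanning the list of maximal subgroups of $\Suz$ in~\cite{CCNPW1985}, the only class whose order is divisible by $3^5 \cdot 11$ is $3^5{:}\M_{11}$. Hence $X \cap Y$ is contained in a conjugate of $3^5{:}\M_{11}$ as a subgroup of index $12$, and the unique such subgroup containing the full normal $3^5$ and surjecting onto the index-$12$ subgroup $\PSL_2(11) < \M_{11}$ is the extension $3^5.\PSL_2(11)$, which has the correct order.

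The main obstacle is not the factorization itself (handed to us by the reference) but the precise structural identification of $X \cap Y$, since ruling out alternative extensions with the same order requires more than an order count. This step is most cleanly settled by a short \magma verification inside one of the three $Z$-conjugacy classes of $\Suz$ in $\PSU_{12}(2)$ together with the observation that the argument of the previous paragraph forces the same structure in each of the three classes. In fact,~\cite[4.4.2]{LPS1990} carries out the requisite analysis in the course of establishing the maximal factorization, so the entire lemma follows by direct citation.
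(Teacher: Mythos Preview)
Your proposal is correct and lands on essentially the same proof as the paper: the lemma is established there by direct citation of~\cite[4.4.2]{LPS1990}, exactly as you conclude in your final sentence. One small point: there is no ``scalar quotient separating $\N_1[Z]$ from $\SU_{11}(2)$'' to absorb, since for $Z=\PSU_{12}(2)$ one has $\N_1[Z]\cong\PGU_{11}(2)=\SU_{11}(2)$ already (as $\gcd(11,3)=1$); so your invocation of Lemma~\ref{LemXia01} and the extra check are unnecessary, though harmless.
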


In the following two lemmas we construct the factor pairs $(X,Y)$ in Rows~16--18 of Table~\ref{TabUnitary}.

\begin{lemma}\label{LemUnitary20}
Let $G=\SU(V)=\SU_{12}(2)$ with $n=12$ and $q=2$, let $H=S{:}\langle\gamma\rangle$ with $S=\G_2(4)<T$, let $K=G_v$, let $Z=\overline{G}$, let $X=\overline{H}$, and let $Y=\overline{K}$. Then $H\cap K=\SL_2(4)$, and $Z=XY$ with $Z=\PSU_{12}(2)$, $X=\G_2(4).2$ and $Y\cong K=\SU_{11}(2)$.
\end{lemma}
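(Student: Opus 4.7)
The plan is to mirror the proof of Lemma~\ref{LemUnitary05} essentially verbatim, replacing the symplectic subgroup $\Sp_m(4)$ by $\G_2(4)$ and invoking the corresponding intersection lemma from the companion paper \cite{LWX(2)}. The identifications $Z=\PSU_{12}(2)$, $X=\G_2(4).2$, and $Y\cong\SU_{11}(2)$ are immediate from the definitions of $H$ and $K$, together with the observation that $\gamma\in\SU(V)$ because $\det(\gamma)=(-1)^m=1$ in characteristic $2$.

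To compute $H\cap K$, the idea is to use that $H<T{:}\langle\gamma\rangle$ in order to write
\[
H\cap K=H\cap\bigl((T{:}\langle\gamma\rangle)\cap K\bigr).
\]
Lemma~\ref{LemUnitary03} with $n=12$ (hence $m=6$, $q=2$) immediately identifies $(T{:}\langle\gamma\rangle)\cap K=\SL_5(4)$, so the problem collapses to determining $\G_2(4)\cap\SL_5(4)$ inside $T=\SL_6(4)$. Geometrically, by Lemma~\ref{LemUnitary01}(c) this $\SL_5(4)$ is the pointwise stabilizer of $\{e_1,f_1\}$ in $T$, so I am really computing the stabilizer in $\G_2(4)\leqslant\Sp_6(4)$ of a hyperbolic pair of vectors in the natural $6$-dimensional module; the relevant intersection lemma from \cite{LWX(2)} (the $\G_2$-analogue of \cite[Lemma~4.6]{LWX(2)} used in Lemma~\ref{LemUnitary05}, and essentially the same fact that underlies Lemma~\ref{LemUnitary11}) gives this stabilizer as the Levi $\SL_2(4)$ of a parabolic of $\G_2(4)$.

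Once $|H\cap K|=|\SL_2(4)|=60$ is in hand, a one-line index check closes the argument:
\[
\frac{|H|}{|H\cap K|}=\frac{2\cdot 4^6(4^6-1)(4^2-1)}{60}=2^{11}(2^{12}-1)=\frac{|\SU_{12}(2)|}{|\SU_{11}(2)|}=\frac{|G|}{|K|},
\]
so $G=HK$, and applying the bar homomorphism delivers $Z=XY$. The only non-bookkeeping step is the intersection computation $\G_2(4)\cap\SL_5(4)=\SL_2(4)$, and the main obstacle is simply citing (or verifying) the correct lemma from \cite{LWX(2)} that controls how the $6$-dimensional natural module of $\G_2(4)$ meets the stabilizer of a hyperbolic pair; every other ingredient is either trivial or already established in the present paper.
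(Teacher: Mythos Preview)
Your proposal is correct and matches the paper's proof essentially line for line: the paper uses $H<T{:}\langle\gamma\rangle$ together with Lemma~\ref{LemUnitary03} to reduce to $H\cap\SL_5(4)=S\cap\SL_5(4)$, then invokes \cite[Lemma~4.12]{LWX(2)} (the $\G_2$-lemma you anticipated) to get $\SL_2(4)$, and finishes with the same index computation. The only cosmetic differences are that the paper writes the index as $2\cdot4^5(4^6-1)$ rather than expanding $|\G_2(4)|$ fully, and it pins down the specific citation as Lemma~4.12 rather than Lemma~4.6.
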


\begin{proof}
Clearly, $Z=\PSU_{12}(2)$, $X=\G_2(4).2$, and $Y\cong K=\SU_{11}(2)$. Since $H<T{:}\langle\gamma\rangle$, we derive from Lemmas~\ref{LemUnitary03} and~\cite[Lemma~4.12]{LWX-Linear} that
\[
H\cap K=H\cap((T{:}\langle\gamma\rangle)\cap K)=H\cap\SL_5(4)=S\cap\SL_5(4)=\SL_2(4).
\]
Therefore,
\[
\frac{|G|}{|K|}=\frac{|\SU_{12}(2)|}{|\SU_{11}(2)|}=2^{11}(2^{12}-1)=2\cdot4^5(4^6-1)=\frac{|\G_2(4).2|}{|\SL_2(4)|}=\frac{|H|}{|H\cap K|}.
\]
This implies that $G=HK$, and so $Z=\overline{G}=\overline{H}\,\overline{K}=XY$.
\end{proof}

\begin{lemma}\label{LemUnitary21}
Let $G=\SU(V){:}\langle\phi\rangle=\SU_{12}(q){:}(2f)$ with $n=12$ and $q\in\{2,4\}$, let $H=S{:}\langle\rho\rangle$ with $S=\G_2(q^2)<T$ and $\rho\in\{\phi,\phi\gamma^{2/f}\}$, let $K=G_v$, let $Z=\overline{G}$, let $X=\overline{H}$, and let $Y=\overline{K}$. Then $H\cap K=\SL_2(q^2)$, and $Z=XY$ with $Z=\PSU_{12}(q).(2f)$, $X=\G_2(q^2).(2f)$ and $Y\cong K=\SU_{11}(q).(2f)$.
\end{lemma}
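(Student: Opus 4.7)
The plan is to mimic the proof of Lemma~\ref{LemUnitary20} almost verbatim, replacing the specific input $q=2$ by the two allowed values $q\in\{2,4\}$ and invoking Lemma~\ref{LemUnitary04} (instead of Lemma~\ref{LemUnitary03}) to absorb the graph--field twist $\rho\in\{\phi,\phi\gamma^{2/f}\}$. First I would read off $Z=\overline{G}=\PSU_{12}(q).(2f)$, $X=\overline{H}=\G_2(q^2).(2f)$ and $Y\cong K=\SU_{11}(q).(2f)$ directly from the definitions; this reduces the lemma to the twin statements $G=HK$ and $H\cap K=\SL_2(q^2)$, from which $Z=\overline{H}\,\overline{K}=XY$ follows by passing to the quotient modulo scalars.

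Next I would compute the intersection. Since $H=S{:}\langle\rho\rangle<T{:}\langle\rho\rangle$, I have
\[
H\cap K=H\cap\bigl((T{:}\langle\rho\rangle)\cap K\bigr),
\]
and Lemma~\ref{LemUnitary04} specialised to $m=6$ yields $(T{:}\langle\rho\rangle)\cap K=\SL_5(q^2)$. The key point is that this lemma has already ruled out any contribution from nontrivial $\rho$-cosets to the stabiliser of $v$, so no extra case analysis is needed here. It then remains to identify $S\cap\SL_5(q^2)=\G_2(q^2)\cap\SL_5(q^2)$ inside $\SL_6(q^2)$, and \cite[Lemma~4.12]{LWX(2)} gives this intersection as $\SL_2(q^2)$.

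Finally I would verify the order identity. Using $|\G_2(q^2)|=q^{12}(q^4-1)(q^{12}-1)$ and $|\SL_2(q^2)|=q^2(q^4-1)$, a short calculation produces
\[
\frac{|H|}{|H\cap K|}=2f\cdot q^{10}(q^{12}-1)\quad\text{and}\quad\frac{|G|}{|K|}=q^{11}(q^{12}-1),
\]
and these agree precisely when $2f=q$, which holds for both $(q,f)=(2,1)$ and $(q,f)=(4,2)$; this is exactly why the hypothesis pins $q$ to $\{2,4\}$. Hence $G=HK$, and reducing modulo scalars yields $Z=XY$.

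I do not anticipate a genuine obstacle, as the argument is essentially a mechanical extension of Lemma~\ref{LemUnitary20} to the case $q=4$. The only delicate point worth double-checking is that $\rho=\phi\gamma^{2/f}$ genuinely lies in $G=\SU(V){:}\langle\phi\rangle$ when $q=4$: this uses $\det(\gamma)=(-1)^m=1$ (since $m=6$) so that $\gamma\in\SU(V)$, hence $\phi\gamma=\rho\in G$. Everything else is a straightforward bookkeeping exercise assembling Lemma~\ref{LemUnitary04} with \cite[Lemma~4.12]{LWX(2)}.
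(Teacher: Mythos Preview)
Your proposal is correct and follows essentially the same route as the paper's proof: identify $Z$, $X$, $Y$, apply Lemma~\ref{LemUnitary04} to get $(T{:}\langle\rho\rangle)\cap K=\SL_5(q^2)$, intersect with $S=\G_2(q^2)$ via \cite[Lemma~4.12]{LWX(2)}, and then verify the order equality using $2f=q$. Your extra sanity check that $\rho\in G$ (via $\det(\gamma)=(-1)^6=1$) is a nice addition but not something the paper spells out.
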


\begin{proof}
It is clear that $Z=\PSU_{12}(q).(2f)$, $X=\G_2(q^2).(2f)$, and $Y\cong K=\SU_{11}(q).(2f)$. By Lemmas~\ref{LemUnitary04} and~\cite[Lemma~4.12]{LWX-Linear}, we have
\[
H\cap K=H\cap((T{:}\langle\rho\rangle)\cap K)=H\cap\SL_5(q^2)=\SL_2(q^2).
\]
As $q\in\{2,4\}$, it follows that
\[
\frac{|G|}{|K|}=\frac{|\SU_{12}(q){:}(2f)|}{|\SU_{11}(q).(2f)|}=q^{11}(q^{12}-1)=2f\cdot q^{10}(q^{12}-1)=\frac{|\G_2(q^2).(2f)|}{|\SL_2(q^2)|}=\frac{|H|}{|H\cap K|},
\]
which implies $G=HK$. Thus $Z=\overline{G}=\overline{H}\,\overline{K}=XY$.
\end{proof}

\section{Proof of Theorem~\ref{ThmUnitary}}

Let $G$ be an almost simple group with socle $L=\PSU_n(q)$, and let $H$ and $K$ be nonsolvable subgroups of $G$ not containing $L$.
In Subsections~\ref{SecUnitary01} and~\ref{SecUnitary02} it is shown that all pairs $(X,Y)$ in Table~\ref{TabUnitary} are factor pairs of $L$.
Hence Lemma~\ref{LemXia02} asserts that, if $(H,K)$ contains any of these pairs $(X,Y)$ and $G/L=(HL/L)(KL/L)$, then $G=HK$.
Conversely, if $G=HK$, then by~\cite[Theorem~4.1]{LWX} the triple $(L,H^{(\infty)},K^{(\infty)})$ lies in Table~\ref{TabInftyUnitary}.

\begin{table}[htbp]
\captionsetup{justification=centering}
\caption{$(L,H^{(\infty)},K^{(\infty)})$ for unitary groups}\label{TabInftyUnitary}
\begin{tabular}{|l|l|l|l|l|l|}
\hline
Row & $L$ & $H^{(\infty)}$ & $K^{(\infty)}$ & Conditions\\
\hline
1 & $\PSU_{2m}(q)$ & $\lefthat(P.\SL_a(q^{2b}))$ ($m=ab$), & $\SU_{2m-1}(q)$ & $P\leqslant q^{m^2}$\\
 & & $\lefthat(P.\Sp_a(q^{2b}))$ ($m=ab$), & & \\
 & & $\lefthat(P.\G_2(q^{2b}))$ ($m=6b$, $q$ even), & & \\
 & & $\lefthat\Sp_{2m}(q)$ & & \\
\hline
2 & $\PSU_6(q)$ & $\G_2(q)'$ & $\SU_5(q)$ & $q$ even\\
\hline
3 & $\PSU_4(3)$ & $3^4{:}\A_5$, $\PSp_4(3)$ & $\PSL_3(4)$ & \\
4 & $\PSU_4(3)$ & $\PSL_2(7)$, $\PSL_3(4)$ & $3^4{:}\PSL_2(9)$ & \\
5 & $\PSU_4(5)$ & $5^4{:}\PSL_2(25)$ & $3.\A_7$ & \\
6 & $\PSU_6(2)$ & $\PSU_4(3)$, $\M_{22}$ & $\SU_5(2)$ & \\
7 & $\PSU_9(2)$ & $\J_3$ & $2^{1+14}{:}\SU_7(2)$ & \\
8 & $\PSU_{12}(2)$ & $\Suz$ & $\SU_{11}(2)$ & \\
\hline
\end{tabular}
\vspace{3mm}
\end{table}

For $(L,H^{(\infty)},K^{(\infty)})$ in Row~2 of Table~\ref{TabInftyUnitary}, viewing the remark after Lemma~\ref{LemUnitary11}, we see that if $G=HK$ then $(H,K)$ tightly contains the pair $(X,Y)=(\G_2(q),\SU_5(q))$ in Row~8 of Table~\ref{TabUnitary}.

For $(L,H^{(\infty)},K^{(\infty)})$ in Rows~3--6 of Table~\ref{TabInftyUnitary}, computation in \magma~\cite{BCP1997} shows that if $G=HK$ then $(H,K)$ tightly contains some pair $(X,Y)$ in Rows~9--14 of Table~\ref{TabUnitary}.

If $(L,H^{(\infty)},K^{(\infty)})$ lies in Rows~7 and~8 of Table~\ref{TabInftyUnitary}, then the pair $(H,K)$ tightly contains $(X,Y)=(H^{(\infty)},K^{(\infty)})$ in Rows~15 and~16, respectively, of Table~\ref{TabUnitary}.

Now assume that $(L,H^{(\infty)},K^{(\infty)})$ lies in Row~1 of Table~\ref{TabInftyUnitary}. Then $n=2m$ and $K^{(\infty)}=\SU_{2m-1}(q)$.
If $H^{(\infty)}=\lefthat\Sp_{2m}(q)$, then $(H,K)$ tightly contains the pair $(X,Y)=(\PSp_{2m}(q),\SU_{2m-1}(q))$ in Row~7 of Table~\ref{TabUnitary}.
Thus assume that 
\[
H^{(\infty)}=\lefthat(P.S)
\] 
with $P\leqslant q^{m^2}$ and $S=\SL_a(q^{2b})$ ($m=ab$), $\Sp_a(q^{2b})$ ($m=ab$) or $\G_2(q^{2b})$ ($m=6b$, $q$ even).
If $P=q^{m^2}$, then $(H,K)$ tightly contains the pair $(X,Y)=(\lefthat(R{:}S),\SU_{2m-1}(q))$ in Rows~1--3 of Table~\ref{TabUnitary}.
If $1<P<q^{m^2}$, then the next lemma shows that $G=HK$ if and only if part~(b) of Theorem~\ref{ThmUnitary} holds.

\begin{lemma}
Let $(H^{(\infty)},K^{(\infty)})=(\lefthat(P.S),\SU_{2m-1}(q))$ be as above such that $1<P<q^{m^2}$. Then $G=HK$ if and only if $G/L=(HL/L)(KL/L)$ and $HK\supseteq R$.
\end{lemma}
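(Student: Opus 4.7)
My plan is to apply Lemma~\ref{LemXia01} with $N=L$ to reduce the statement to showing $L\subseteq HK$, and to deduce this containment from the hypothesis $R\subseteq HK$ combined with the structural information on $H^{(\infty)}$ and $K^{(\infty)}$.

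The forward direction is essentially immediate: if $G=HK$, then $R\subseteq L\subseteq G=HK$, and taking the quotient modulo $L$ gives $G/L=(HL/L)(KL/L)$. For the reverse direction, I would first observe that the hypothesis $R\subseteq HK$ lets us absorb $R$ into the product: given $h\in H$, $r\in R$, $k\in K$, writing $r=h'k'$ with $h'\in H$, $k'\in K$, we have $hrk=(hh')(k'k)\in HK$, so $HRK\subseteq HK$. Hence it suffices to show $HRK\supseteq L$. Certainly $HRK\supseteq H^{(\infty)}RK^{(\infty)}$, and because $H^{(\infty)}=\lefthat(P.S)$ sits inside the parabolic $\Pa_m[L]=R.T$ with $P\leqslant R$ and image $S$ in the Levi $T$, the normality of $R$ in $R.T$ gives $H^{(\infty)}R=\lefthat(R.S)$, namely the full preimage of $S$ in $R.T$ modulo scalars. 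Thus $HRK\supseteq \lefthat(R.S)\cdot K^{(\infty)}$.

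The last step invokes Lemma~\ref{LemUnitary02}, which constructs a factor pair $(X,Y)=(\lefthat(R.S),\SU_{2m-1}(q))$ of $L$ in a standard configuration. In our situation, $\lefthat(R.S)$ (for the given type of $S$) and $K^{(\infty)}=\SU_{2m-1}(q)$ (a stabilizer of a nonsingular vector) are each conjugate in $L$ to these standard choices, so Lemma~\ref{LemXia03} transfers the factor pair property to our pair, giving $\lefthat(R.S)\cdot K^{(\infty)}\supseteq L$. Consequently $L\subseteq HRK\subseteq HK$, and Lemma~\ref{LemXia01} completes the argument. The principal technical hurdle is the structural identity $H^{(\infty)}R=\lefthat(R.S)$ inside $L$: it requires careful handling of the $\lefthat$ operation with respect to set products and of the (possibly non-split) extension $P.S$ inside $R.T$, although once the parabolic framework is set up it reduces to a normality computation.
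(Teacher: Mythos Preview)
Your proposal is correct, and it takes a somewhat more direct route than the paper's proof. Both arguments reduce via Lemma~\ref{LemXia01} to showing $L\subseteq HK$, and both use the absorption step $R\subseteq HK\Rightarrow HRK=HK$. The divergence is in how $L\subseteq HRK$ is established. You apply Lemma~\ref{LemUnitary02} directly with the given $S$, obtaining $\lefthat(R{:}S)\cdot K^{(\infty)}\supseteq L$, and then observe that $H^{(\infty)}R=\lefthat(R{:}S)$ (which, as you note, is just a normality computation in $R{:}T$ lifted to $\SU(V)$ and pushed down). The paper instead works with the full $M=\overline{R{:}T}$: it uses Lemma~\ref{LemUnitary02} only in the case $S=T$ to get $MK\supseteq L$, and then separately shows $M\subseteq HRK$ by passing to the Levi quotient $M/R\cong T$ and invoking the factorizations $T=S\cdot\bigl(q^{2m-2}{:}\SL_{m-1}(q^2)\bigr)$ from~\cite{LWX(2)}. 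Your approach is more economical because Lemma~\ref{LemUnitary02} already packages exactly that Levi factorization for each admissible $S$; the paper's detour through $M$ re-derives it. The conjugacy issue you handle via Lemma~\ref{LemXia03} is implicit in the paper's argument as well, so no precision is lost.
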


\begin{proof}
Obviously, if $G=HK$ then $G/L=(HL/L)(KL/L)$ and $HK\supseteq R$. Suppose conversely that $G/L=(HL/L)(KL/L)$ and $HK\supseteq R$.
Let $M=\overline{R{:}T}\cong R.T$. Then by Lemma~\ref{LemUnitary02} we have $MK\supseteq MK^{(\infty)}\supseteq L$ with 
\[
(M\cap K)R/R\geqslant q^{2m-2}{:}\SL_{m-1}(q^2).
\]
Since $S\lesssim HR/R$, we then conclude from~\cite[Lemmas~4.2,~4.3 and~4.5]{LWX-Linear} that 
\[
T=\SL_m(q)\subseteq(HR/R)((M\cap K)R/R).
\]
This together with the condition $HK\supseteq R$ implies that $HK\supseteq R.T=M$ and so $HK\supseteq MK\supseteq L$,
Thus by Lemma~\ref{LemXia01} we obtain $G=HK$ as $G/L=(HL/L)(KL/L)$.
\end{proof}

Finally, if $P=1$ then the following lemma shows that $(H,K)$ tightly contains $(X^\alpha,Y^\alpha)$ for some pair $(X,Y)$ in Rows~4--6 of Table~\ref{TabUnitary} and $\alpha\in\Aut(L)$.

\begin{lemma}
Suppose that $G=HK$ with $H^{(\infty)}=\lefthat S$ and $K^{(\infty)}=\SU_{2m-1}(q)$, where $S=\SL_a(q^{2b})$ ($m=ab$), $\Sp_a(q^{2b})$ ($m=ab$) or $\G_2(q^{2b})$ ($m=6b$, $q$ even). Then $(H,K)$ tightly contains $(X^\alpha,Y^\alpha)$ for some pair $(X,Y)$ in Rows~\emph{4--6} or~\emph{16--18} of Table~$\ref{TabUnitary}$ and $\alpha\in\Aut(L)$.
\end{lemma}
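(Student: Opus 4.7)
The plan is to exploit the order identity $|H|\,|K|=|G|\,|H\cap K|$ to force the parameters $(b,q)$ into a very narrow range, and then to recognise the surviving triples directly against the table.

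First, using Lemmas~\ref{LemXia03} and~\ref{LemXia04}, I will replace $(H,K)$ by a joint $\Aut(L)$-conjugate so that $K=\overline{G_v}$ in the standard setup of Section~\ref{SecUnitary01} and $H^{(\infty)}=\overline{S}$ lies inside the Levi complement $\overline{T}=\overline{\SL_m(q^2)}$ of the unipotent radical $\overline{R}$. Since $H^{(\infty)}$ is characteristic in $H$, the group $H$ normalises $\overline{S}$, so $|H|$ is controlled by the normaliser of $\overline{S}$ in $\Aut(L)$: a product of the Levi-internal normaliser $\overline{\SL_a(q^{2b}).b}$ (for the $\SL$ case; the $\Sp$ and $\G_2$ cases are smaller) with a subgroup of $\Out(L)$ whose order divides $(n,q+1)\cdot 2f$.

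Next, I will invoke Lemma~\ref{LemUnitary01}(c), which gives $T\cap K=\SL_{m-1}(q^2)$, and hence $\overline{S}\cap K\leqslant\overline{S\cap\SL_{m-1}(q^2)}$. The intersections on the right are then handled case by case: for $S=\SL_m(q^2)$ directly, and for the remaining $S$ via~\cite[Lemmas~4.2,~4.3,~4.5 and~4.6]{LWX(2)}. Substituting into $|H|/|H\cap K|=|G|/|K|$ and cancelling the dominant contribution $|S|/|S\cap\SL_{m-1}(q^2)|$ will leave an imbalance of exactly one factor of $q$, reflecting the discrepancy between $q^{2m-1}$ in $|G|/|K|$ and $q^{2m-2}$ in $|\SL_m(q^2)|/|\SL_{m-1}(q^2)|$. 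This residual $q$ must come either from (i)~the $b$-factor in the Levi normaliser of a field-extension subgroup when $b>1$, or (ii)~an outer-automorphism component of $H$. A direct arithmetic check will rule out (i), forcing $b=1$; the outer component must then contribute exactly $q$, and since $\gcd(q,q+1)=1$ this requires $q\mid 2f$, giving $q\in\{2,4\}$ for $q=p^f$ with $p$ prime.

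With $b=1$ and $q\in\{2,4\}$ in place, I will then match $(H,K)$ to the existing constructions. For $S=\SL_m(q^2)$ or $\Sp_m(q^2)$, the outer generator producing the extra factor $q$ must be the graph involution $\gamma$ when $q=2$, or an element of the form $\phi$ or $\phi\gamma$ when $q=4$; this puts $(H,K)$ inside an $\Aut(L)$-conjugate of one of the pairs $(X,Y)$ in Rows~4--6 of Table~\ref{TabUnitary}, as constructed in Lemmas~\ref{LemUnitary03}--\ref{LemUnitary06}. For $S=\G_2(q^2)$, the hypothesis $m=6b=6$ forces $n=12$, placing us in Rows~16--18 via Lemmas~\ref{LemUnitary20} and~\ref{LemUnitary21}. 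In each case the socles of $H^{(\infty)}$ and $K^{(\infty)}$ agree with those of $X^{(\infty)}$ and $Y^{(\infty)}$, so tight containment follows.

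The hard part will be the quantitative elimination of $b>1$: although $b$ itself is small, one must simultaneously control \emph{(a)}~the $b$-extension in the Levi normaliser, \emph{(b)}~the refined intersection $S\cap\SL_{m-1}(q^2)$, which for a field-extension subgroup is typically much smaller than $\SL_{m-1}(q^2)$, and \emph{(c)}~the outer-automorphism contribution, before concluding that the $q^{2m-1}$ deficit can be met only with $b=1$ and the outer part genuinely providing the factor $q$. Once that arithmetic is discharged, the identification with the tabulated pairs is a routine comparison of socles.
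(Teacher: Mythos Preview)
Your strategy---direct order-counting to pin down $q$ and $b$, then match against the table---is genuinely different from the paper's. The paper does not do arithmetic at all to restrict $q$: it invokes the Liebeck--Praeger--Saxl maximal factorization tables~\cite{LPS1990,LPS1996} to place $H$ inside a maximal subgroup $A$ with $A\cap L=\lefthat\SL_m(q^2).(q-1).2$ and $q\in\{2,4\}$ (after reducing the $\PSp_{2m}(q)$ possibility via a short geometric argument), and only then performs a case analysis on the shape of $H$ modulo $L$.

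There is, however, a genuine gap in your final identification step. Once $b=1$ and $q\in\{2,4\}$ are in hand, knowing that $|H|/|\overline{S}|$ must carry an extra factor of $q$ does \emph{not} by itself determine which extension of $\overline{S}$ lies in $H$. For $q=2$ (where, incidentally, $\gamma\in\SU(V)$ is \emph{inner} since $\det\gamma=1$, so calling it an ``outer generator'' is inaccurate) there are three relevant involutory extensions of $\overline{S}$ inside $N_{\Aut(L)}(\overline{T})$: by $\gamma$, by $\phi$, and by $\phi\gamma$. The first two give Rows~4 and~5; the third is exactly the paper's case~(i), and it does \emph{not} yield a factor pair. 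Your order bound cannot see this: with $H=\overline{S{:}\langle\phi\gamma\rangle}$ and only the lower bound $H\cap K\geqslant S\cap K^{(\infty)}=\SL_{m-1}(q^2)$ from Lemma~\ref{LemUnitary01}(c), the equation $|H|/|H\cap K|=|G|/|K|$ balances on the nose, so your arithmetic would wrongly admit this $H$. What actually kills it is that $H\cap K$ is strictly larger here: the paper exhibits an explicit $t\in D$ with $v^{t\phi\gamma}=v$, so $C\cap B\geqslant\SL_{m-1}(q^2).2$ and the $2$-part of $|C|/|C\cap B|$ drops to $q^{2m-2}<q^{2m-1}$. This sharper intersection computation is the crux of the paper's argument and is entirely missing from your plan.

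A secondary point: your elimination of $b>1$ is only asserted. When $b>1$ the deficit between $|S|/|S\cap\SL_{m-1}(q^2)|$ and $|G|/|K|$ is $q^{2b-1}$ rather than $q$, and the normaliser of $S$ in $T$ contributes a cyclic factor~$b$; checking that the diagonal, graph, and field pieces together cannot absorb $q^{2b-1}$ (e.g.\ for $q=2$, $b=2$) requires care you have not supplied. Moreover, the lemmas you cite from~\cite{LWX(2)} compute $S_{U_1,e_1+U_1}$, not the quantity $S_{e_1,U_1}=S\cap\SL_{m-1}(q^2)$ that your argument needs.
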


\begin{proof}
Since $\bfO_p(H)=1$, we see from the classification of maximal factorizations of $G$ (see~\cite[Theorem~A]{LPS1990} and~\cite{LPS1996}) that $H$ is contained in a maximal subgroup $A$ of $G$ such that either $A\cap L=\lefthat\SL_m(q^2).(q-1).2$ with $q\in\{2,4\}$ or $A^{(\infty)}=\PSp_{2m}(q)$. 
Suppose that $A^{(\infty)}=\PSp_{2m}(q)$. Then $H$ is contained in a quadratic field extension subgroup of $A$, that is, $H$ stabilizes a vector space of dimension $m$ over $\bbF_{q^2}$. Since $|H|$ is divisible by $q^{2m}-1$, we then conclude that $H$ stabilizes a totally isotropic $m$-subspace of $V$.
This together with $\bfO_p(H)=1$ implies that $H$ is contained in a maximal subgroup $A$ of $G$ such that either $A\cap L=\lefthat\SL_m(q^2).(q-1).2$.

Thus there always exists a maximal subgroup $A$ of $G$ containing $H$ such that $A\cap L=\lefthat\SL_m(q^2).(q-1).2$ with $q\in\{2,4\}$. 
Moreover,~\cite[Theorem~A]{LPS1990} shows that $G\geqslant L.4$ if $q=4$. Note that applying this conclusion to the factorization $HL\cap KL=(H\cap KL)(K\cap HL)$ gives $HL/L\geqslant L.4$ and $KL/L\geqslant L.4$ for the case $q=4$.
Hence, up to a conjugation of some $\alpha\in\Aut(L)$ on $H$ and $K$ at the same time, one of the following appears:
\begin{enumerate}[{\rm (i)}]
\item $q=2$, and $H\leqslant\overline{D{:}\langle\phi\gamma\rangle}$ with $D=\GL_m(q^2)$ stabilizing $U$ and $W$ respectively;
\item $q=2$, and $H\geqslant\overline{S{:}\langle\phi\rangle}$ or $\overline{S{:}\langle\gamma\rangle}$;
\item $q=4$, and $H\geqslant\overline{S{:}\langle\phi\rangle}$ or $\overline{S{:}\langle\phi\gamma\rangle}$.
\end{enumerate}

Suppose that~(i) holds. Let $C=\overline{D{:}\langle\phi\gamma\rangle}$, and let $B$ be the subgroup of $G$ stabilizing $\langle v\rangle_{\bbF_{q^2}}$.
Without loss of generality, assume that $K\leqslant B$. Then we have $G=CB$.
Since $\beta(\lambda^qe_1,\lambda^{-1}f_1)=\beta(e_1,f_1)$, there exists $t\in D$ such that $e_1^t=\lambda^qe_1$ and $f_1^t=\lambda^{-1}f_1$. 
It follows that 
\[
v^{t\phi\gamma}=(e_1+\lambda f_1)^{t\phi\gamma}=(\lambda^qe_1+f_1)^{\phi\gamma}=(\lambda e_1+f_1)^\gamma=e_1+\lambda f_1=v,
\]
which means $\overline{t\phi\gamma}\in B$. Since Lemma~\ref{LemUnitary01} implies $\overline{D}\cap B\geqslant\SL_{m-1}(q^2)$, we then derive that $C\cap B\geqslant\SL_{m-1}(q^2).2$. Consequently, 
\[
\frac{|C|_2}{|C\cap B|_2}\leqslant\frac{|\SL_m(q^2).2|_2}{|\SL_{m-1}(q^2).2|_2}=q^{2m-2}<q^{2m-1}=\frac{|\SU_{2m}(q)|_2}{|\SU_{2m-1}(q)|_2}\leqslant\frac{|G|_2}{|B|_2},
\]
contradicting the factorization $G=CB$.

Thus we conclude that one of~(ii) or~(iii) appears. Note that this conclusion also holds for the factorization $HL\cap KL=(H\cap KL)(K\cap HL)$.
Hence $(H,K)$ tightly contains some pair $(X,Y)$ in Rows~4--6 or~16--18 of Table~\ref{TabUnitary}.
\end{proof}

\section*{Acknowledgments}
The first author acknowledges the support of NNSFC grants no.~11771200 and no.~11931005. The second author acknowledges the support of NNSFC grant no.~12061083.

\end{document}